\documentclass[10pt,reqno,a4paper]{amsart}%
\usepackage{amsfonts}
\usepackage[pdftex]{graphicx}
\usepackage{caption}
\usepackage{amsmath}
\usepackage{amssymb,amsthm,hyperref,caption}
\usepackage{amssymb}%
\providecommand{\U}[1]{\protect\rule{.1in}{.1in}}
\newtheorem{theorem}{Theorem}
\newtheorem{lem}[theorem]{Lemma}

\newtheorem{prop}{Proposition}

\theoremstyle{definition}

\newtheorem{remark}{Remark}
\newtheorem{example}{Example}

\DeclareMathOperator{\Li}{Li}
\DeclareMathOperator{\Erfc}{Erfc} 

\newcommand{\E}{\ensuremath{\mathbb{E}}} 
\newcommand{\ap}{\ensuremath{\mathcal{A}}}

\email{kuba@technikum-wien.at}
\email{mordechailevy@gmail.com}

\keywords{Multiple zeta values, Nielsen's generalized polylogarithms, Appell sequence, Asymptotic expansion}
\subjclass[2000]{11M32, 33B30, 60C05}
\begin{document}
\author[M.~Kuba]{Markus Kuba}
\address{Markus Kuba\\
Department Applied Mathematics and Physics\\
University of Applied Sciences - Technikum Wien\\
H\"ochst\"adtplatz 5, 1200 Wien}
\author[M.~Levy]{Moti Levy}
\address{Moti Levy\\
The Open University of Israel, Department of Mathematics and Computer Science}
\title[Asymptotic expansions of integrals and Nielsen's polylogarithms]{Asymptotic expansions of integrals and Nielsen's polylogarithms}

\begin{abstract}
This article derives full asymptotic expansions for
integrals of the form
\[
\int_{0}^{1}f(u)(1+q\cdot u^{n})^{w/n}du
\]
as $n\rightarrow\infty$, with parameters real $w\neq 0$ and $q\in(-1,1]$, or positive $w$ for $q=-1$.
We relate the coefficients of the asymptotic expansions to Nielsen's
generalized polylogarithms. For $q=-1$, we obtain an expansion in terms of
multiple zeta values, which in this setting, reduce to ordinary zeta values. A
key point is that for $q=1$, the integrals typically produce alternating
multiple zeta values; we formulate a precise symmetry constraint on the
relevant coefficient sequence under which all coefficients reduce to
polynomials in ordinary zeta values. We also translate this symmetry into a
statement about a binomial transform, and we verify the condition for several
classical Appell-type families, like Euler, Bernoulli, Genocchi, and Hermite.
Finally, we obtain precise results about the convergence of norms of random variables.

\end{abstract}
\maketitle

\section{Introduction}

Given a real function $f$ and parameters $q\in(-1,1]$, $w\neq0$ or $q=-1$ and positive real $w$. Let
$I_{n}=I_{n}(q,w)$ be defined as
\begin{equation}
I_{n}=\int_{0}^{1}f(u)(1+q\cdot u^{n})^{w/n}du.\label{eq:int1}%
\end{equation}
We assume that $f$ is analytic on the unit interval and that it satisfies
\[
\int_{0}^{1}|f(u)|du<\infty,\quad\int_{0}^{1}|f(u)|(1+q\cdot u^{n}%
)^{w/n}\,du<\infty.
\]
The evaluation and asymptotic expansion of the family of integrals $I_{n}$ for
integer $n$ tending to infinity is closely related to many questions. First,
we point out several problems posed in the American Mathematical Monthly. In
particular, we single out the evaluation and asymptotic expansions of the
integral
\begin{equation}
\int_{0}^{\frac{\pi}{2}}\bigl|\sin^{n}(x)-\cos^{n}(x)\bigr|^{\frac{1}{n}%
}\,dx,\label{eq:int2}%
\end{equation}
as recently proposed~\cite{AMM2025}, asking for the evaluation of the first
two coefficients of the asymptotic expansion of the integral. We will show in
Section~\ref{sec:Apps} that~\eqref{eq:int2} is covered by our general form
with $q=-1$, $w=1$ and $f(u)=2(1+u^{2})^{-3/2}$.
Second, we turn to the theory of multiple zeta values. The multiple zeta
values, in short MZVs, are defined by
\[
\zeta(i_{1},\dots,i_{k})=\sum_{n_{1}>\cdots>n_{k}\geq1}\frac{1}{n_{1}^{i_{1}%
}\cdots n_{k}^{i_{k}}},
\]
$k\geq1$, for positive integers $i_{1},\dots,i_{k}$ with $i_{1}>1$. This
notation can be extended to alternating, sometimes also called colored
multiple zeta values, by putting a bar over those exponents with an associated
sign in the numerator, as in
\[
\zeta(\bar{5},\bar{2},1)=\sum_{n_{1}>n_{2}>n_{3}\geq1}\frac{(-1)^{n_{1}+n_{2}%
}}{n_{1}^{5}n_{2}^{2}n_{3}}.
\]
Note that $\zeta(i_{1},i_{2},\dots,i_{k})$ converges unless $i_{1}$ is an
unbarred 1. We have the special values $\zeta(\bar{1})=-\log2$ and also
\[
\zeta(\bar{n})=(2^{1-n}-1)\zeta(n)
\]
for $n\geq2$. Multiple zeta values and their variants have been extensively
studied and turned into a vast research area, dating back to Euler and in
modern times started by Hoffman~\cite{H1992} and Zagier~\cite{Z1992}; we also
refer the reader to Borwein et al.~\cite{BBB}. It is often of interest to
determine whether mathematical objects can be written entirely using ordinary
- single argument - zeta values. We will obtain a complete asymptotic
expansion of~\eqref{eq:int1} in terms of a special function. The cases
$q\in\{-1,1\}$ are of special interest and we obtain an expression directly in
terms of alternating multiple zeta values and ordinary MZVs. Furthermore, we
discuss for $q=1$ reducibility to ordinary zeta values. A third motivation for
studying such families comes from random variables and norms. It is classical
that the $p$-norm of a vector converges for $p\rightarrow\infty$ to the
maximum norm. The same questions for random variables, such as
\begin{equation}
Z_{n}=\Vert(U,1-U)\Vert_{n},\quad U=\text{Uniform}[0,1],\label{def:norm}%
\end{equation}
and their asymptotics for $n\rightarrow\infty$ has intriguing relations to
multiple zeta values and their variants~\cite{HKLL2020}. The study of the
moments leads directly to integrals of the form~\eqref{eq:int1}. For example,
the expectation of $Z_{n}$ can be written in terms of $I_{n}$ with $q=w=1$ and
$f(u)=2(1+u)^{-3}$. Higher moments lead to values $w>1$. We will show how the
previous studies~\cite{HKLL2020} can be put under the umbrella
of~\eqref{eq:int1} and can be greatly extended. Therein, the aforehand mentioned special case of $I_n=\E(Z_n)$ was discussed in detail and the asymptotic expansion in terms of single-valued zeta functions was obtained. Here, we obtain a conceptual explanation of this phenomenon for a broad family of integrals, with arbitrary $f$ and general values of $q$ and $w$. This allows us to pinpoint exactly which part of the integral $I_n$ gives rise to expressions related to special functions and multiple zeta values and which part of $I_n$ governs the structure of the coefficients in the asymptotic expansions. 
%
%
Furthermore, a criterion for reduction to single-valued zeta functions is given, also connecting our work to Appell sequences of polynomials. Moreover, the generality of the integral $I_n$ allows to cover a great many concrete examples, as outlined in the last section of this work.

\smallskip

Our main interest is to obtain the asymptotic expansion of $I_{n}$ for $n
\to\infty$ of the form
\[
I_{n} \sim\sum_{p=0}^{\infty} \frac{a_{p}}{n^{p}},
\]
with coefficients $a_{p}=a_{p}(q,w)$. We introduce an important special
function: Nielsen~\cite{Nielsen} defined and studied the generalized
polylogarithm functions $S_{m,p}$, given by the following integral
\begin{equation}
S_{m,p}(z) := \frac{(-1)^{m+p-1}}{(m-1)! p!}\int_{0}^{1} \log^{m-1}(t)
\log^{p}(1-zt) \frac{dt}{t},
\end{equation}
where $m$ and $p$ are positive integers. Note that $S_{m-1,1}=\Li_{m}$, where
the classical $m$-th polylogarithm is defined by the series $\Li_{m}%
(z)=\sum_{k\ge1}\frac{z^{k}}{k^{m}}$. For more properties of $S_{m,p}$, we
refer the reader to~\cite{Kolbig} and also to the recent study of Charlton,
Gangl and Radchenko~\cite{Charlton}.

\smallskip

In the following, we will show that the coefficients $a_{p}=a_{p}(q,w)$ are
always linear combinations of Nielsen's generalized polylogarithm
$S_{m,p}(-q)$. For $q=-1$ it will turn out that $a_{p}(-1,w)$ can always be
expressed as polynomials in ordinary Riemann zeta values, $\zeta(s)$.
Conversely, the coefficients $a_{p}(1,w)$ involve Nielsen's generalized
polylogarithms evaluated at $z=-1$. We prove that in the important special
case $q=w=1$ the coefficients $a_{p}(1,1)$ reduce to polynomials in ordinary
zeta values $\zeta(s)$ if and only if the coefficients of the asymptotic
series of the integral
\[
\int_{0}^{1}f(u)u^{n}du,\quad n\rightarrow\infty
\]
satisfy a certain symmetry condition. Then, in Section~\ref{sec:Apps} we turn
to applications for norms of random variables.

\section{Derivation of the asymptotic series}

\subsection{Logarithms and Stirling numbers}

We write $(1+qu^{n})^{w/n}$ using the $\exp$-$\ln$ representation and expand
the exponential in series.
\begin{align*}
I_{n}  &  =\int_{0}^{1}f(u)\,(1+q\cdot u^{n})^{w/n}\,du\\
&  =\int_{0}^{1}f(u)\left(  1+\sum_{k=1}^{\infty}\frac{1}{k!}\left(  \frac
{w}{n}\log(1+q\cdot u^{n})\right)  ^{k}\right)  du.
\end{align*}
We obtain further $I_{n}=\int_{0}^{1}f(u)\,du+\sum_{k=1}^{\infty}\frac{w^{k}%
}{k!n^{k}}\int_{0}^{1}f(u)\log^{k}(1+qu^{n})du.$ Next we apply the standard
expansion of
\begin{equation}
\log^{k}(1+qx)=k!\sum_{m=0}^{\infty}(-1)^{m-k}%
\genfrac{[}{]}{0pt}{}{m}{k}%
\frac{q^{m}x^{m}}{m!}, \label{eq:ExpanLog}%
\end{equation}
where$%
\genfrac{[}{]}{0pt}{}{m}{k}%
$ denote the unsigned Stirling number of the first kind, also called Stirling
cycle numbers. This gives the exact expression
\[
I_{n}=\int_{0}^{1}f(u)\,du+\sum_{k=1}^{\infty}\frac{(-w)^{k}}{n^{k}}%
\sum_{m=1}^{\infty}%
\genfrac{[}{]}{0pt}{}{m}{k}%
\frac{(-q)^{m}}{m!}\int_{0}^{1}f(u)u^{nm}\,du,
\]
valid for $|w|<n$, where we have interchanged summation and integration.

\subsection{Moments and Watson's lemma}

In order to gain more insight into $I_{n}$, we define the values $\varphi_{r}$
in terms of $f$:
\[
\varphi_{r}=\int_{0}^{1}f(u)u^{r}du,\quad r\in\mathbb{N}.
\]
The values $\varphi_{r}$ can be interpreted probabilistically, subject to
$\int_{0}^{1}f(u)du=1$ and $f$ being non-negative, in other words $f$ being a
density function supported on the unit interval. Then, $\varphi_{r}$ is simply
the $r$-th moment of the corresponding distribution.
%
We rewrite $I_{n}$ using the moments $\varphi_{r}$ to get
\begin{equation}
I_{n}=\int_{0}^{1}f(u)\,du+\sum_{k=1}^{\infty}\frac{(-w)^{k}}{n^{k}}%
\sum_{m=1}^{\infty}%
\genfrac{[}{]}{0pt}{}{m}{k}%
\frac{(-q)^{m}}{m!}\,\varphi_{nm}. \label{eq:Mom}%
\end{equation}
Next we turn to the asymptotics of the moments.

\begin{prop}
\label{Prop:asympMom} The moments $\varphi_{r}=\int_{0}^{1}f(u)u^{r}$ satisfy
the asymptotic expansion
\[
\varphi_{r}\sim\sum_{\nu=0}^{\infty}\frac{\beta_{\nu}}{r^{\nu+1}},\qquad
\beta_{\nu}=(-1)^{\nu}\sum_{\ell=0}^{\nu}%
\genfrac{\{}{\}}{0pt}{}{\nu+1}{\ell+1}%
f^{(\ell)}(1),
\]
where $%
\genfrac{\{}{\}}{0pt}{}{\nu}{\ell}%
$ denote the Stirling numbers of the second kind, also called Stirling
partition numbers.
\end{prop}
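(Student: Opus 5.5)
Substituting $u=e^{-t}$ turns the moment into a Laplace transform,
\[
\varphi_r=\int_0^\infty f(e^{-t})e^{-t}e^{-rt}\,dt=\int_0^\infty g(t)e^{-rt}\,dt,\qquad g(t):=f(e^{-t})e^{-t},
\]
after which Watson's lemma gives the expansion. Since $f$ is analytic on the unit interval, in particular near $u=1$, the function $g$ is analytic near $t=0$ and has a Taylor expansion $g(t)=\sum_{\nu\ge0}g_\nu t^\nu$ there. The integrability assumption $\int_0^1|f(u)|\,du<\infty$ says precisely that $g\in L^1(0,\infty)$. Watson's lemma, in its standard form for an integrable $g$ with an asymptotic power series at $0$, then yields
\[
\varphi_r\sim\sum_{\nu\ge0}\frac{g_\nu\,\nu!}{r^{\nu+1}}.
\]
To justify this one splits the integral at a small $\delta>0$. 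On $[\delta,\infty)$ the contribution is $O(e^{-r\delta})$, because $|g(t)|e^{-rt}\le e^{-r\delta}|g(t)|$ and $g\in L^1$. On $[0,\delta]$ one replaces $g$ by its Taylor polynomial plus a remainder $O(t^{N+1})$ and integrates term by term; the completed tails $\int_\delta^\infty$ of the monomials are again exponentially small.

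The remaining task is to identify $\beta_\nu=\nu!\,g_\nu=(d/dt)^\nu g(t)\big|_{t=0}$. I would compute this through the differential operator rather than by expanding $e^{-t}$ separately. Write $g(t)=h(e^{-t})$ with $h(u)=uf(u)$. Since $\frac{d}{dt}=-u\frac{d}{du}$ when $u=e^{-t}$,
\[
g^{(\nu)}(0)=(-1)^\nu\bigl[(u\,\tfrac{d}{du})^\nu h\bigr](1).
\]
I would then apply the classical identity $(u\,\frac{d}{du})^\nu=\sum_{j}\genfrac{\{}{\}}{0pt}{}{\nu}{j}u^j\frac{d^j}{du^j}$, which can be proved by induction on $\nu$ using the recurrence $\genfrac{\{}{\}}{0pt}{}{\nu+1}{j}=j\genfrac{\{}{\}}{0pt}{}{\nu}{j}+\genfrac{\{}{\}}{0pt}{}{\nu}{j-1}$. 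Evaluated at $u=1$, this gives
\[
g^{(\nu)}(0)=(-1)^\nu\sum_j\genfrac{\{}{\}}{0pt}{}{\nu}{j}h^{(j)}(1),\qquad h^{(j)}(1)=f^{(j)}(1)+jf^{(j-1)}(1).
\]
Collecting the coefficient of $f^{(\ell)}(1)$ gives $\genfrac{\{}{\}}{0pt}{}{\nu}{\ell}+(\ell+1)\genfrac{\{}{\}}{0pt}{}{\nu}{\ell+1}$, which by the same recurrence equals $\genfrac{\{}{\}}{0pt}{}{\nu+1}{\ell+1}$. This is exactly the claimed formula. Equivalently, one can absorb the factor $u$ directly: $(u\frac{d}{du})^\nu(uf)=u\,(u\frac{d}{du}+1)^\nu f$, and the operator $(\theta+1)^\nu$ with $\theta=u\frac{d}{du}$ expands with Stirling numbers $\genfrac{\{}{\}}{0pt}{}{\nu+1}{\ell+1}$.

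The main obstacle is this combinatorial bookkeeping, namely getting the shift $\nu+1,\ell+1$ and the sign right. I would check it on $\nu=0,1$: we have $g(0)=f(1)$ and $g'(0)=-(f(1)+f'(1))$, matching $\genfrac{\{}{\}}{0pt}{}{2}{1}=\genfrac{\{}{\}}{0pt}{}{2}{2}=1$. The analytic part is routine once $g\in L^1$ and $g$ is smooth at $0$, both of which follow from the stated hypotheses. The expansion is asymptotic only, not convergent, since $f$ need not be entire.
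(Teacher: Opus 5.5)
Your proposal is correct and takes the same route as the paper: the substitution $u=e^{-t}$, Watson's lemma for $g(t)=e^{-t}f(e^{-t})$, and the expansion $\theta^{\nu}=\sum_{k}\genfrac{\{}{\}}{0pt}{}{\nu}{k}x^{k}\partial_x^{k}$ applied to $xf(x)$, with the Stirling recurrence shifting the indices to $\genfrac{\{}{\}}{0pt}{}{\nu+1}{\ell+1}$. The only differences are cosmetic: you sketch a proof of Watson's lemma where the paper cites it, and you write out the Leibniz step $h^{(j)}(1)=f^{(j)}(1)+jf^{(j-1)}(1)$ that the paper leaves implicit in its appeal to the recurrence.
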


\begin{remark}
In some special cases it is possible to turn the asymptotic expansion into an
\emph{exact} identity, compare with~\cite{HKLL2020}.
\end{remark}

Before we turn to the proof, we recall Watson's lemma for Laplace-type integrals.

\begin{lem}
[Watson's Lemma~\cite{Watson}]Suppose $g(t)$ is absolutely integrable on
$[0,\infty)$:
\[
\int_{0}^{\infty}|g(t)|\,dt<\infty.
\]
Suppose further that $g(t)$ is real-analytic at $t=0$, with
\[
g(t)=\sum_{n=0}^{\infty}\frac{g^{(n)}(0)}{n!}t^{n},
\]
then the exponential integral
\[
F(r):=\int_{0}^{\infty}g(t)e^{-rt}\,dt
\]
is finite for all $r>0$ and it has for $r\rightarrow\infty$ the asymptotic
expansion
\begin{equation}
F(r)\sim\sum_{n=0}^{\infty}\frac{g^{(n)}(0)}{r^{n+1}}.
\end{equation}

\end{lem}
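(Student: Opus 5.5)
The plan is to prove Watson's Lemma in the standard way: split the Laplace integral at a small fixed point $\delta>0$, use the Taylor expansion of $g$ on $[0,\delta]$, and show that everything beyond $\delta$ is exponentially small in $r$.

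\emph{Finiteness.} Since $|e^{-rt}|\le 1$ for $r>0$ and $t\ge0$, we have $|F(r)|\le\int_0^\infty|g(t)|\,dt<\infty$.

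\emph{Choice of $\delta$ and tail estimate.} Real-analyticity at $0$ means the Taylor series of $g$ converges to $g$ on $[0,\rho)$ for some $\rho>0$. Fix $0<\delta<\rho$. On $[\delta,\infty)$ we have $e^{-rt}\le e^{-r\delta}$, so
\[
\Bigl|\int_\delta^\infty g(t)e^{-rt}\,dt\Bigr|\le e^{-r\delta}\int_0^\infty|g(t)|\,dt .
\]
This is $O(r^{-M})$ for every $M$ and hence invisible in the asymptotic expansion.

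\emph{Truncated expansion near $0$.} Fix $N\ge1$ and write
\[
g(t)=\sum_{n=0}^{N-1}\frac{g^{(n)}(0)}{n!}t^n+R_N(t).
\]
Because $[0,\delta]$ is a compact subset of the interval of convergence, the power series converges absolutely and uniformly there. Hence there is a constant $C_N$ with $|R_N(t)|\le C_N t^N$ for $0\le t\le\delta$; one can factor $t^N$ out of the convergent tail of the series. For the polynomial part,
\[
\int_0^\delta t^n e^{-rt}\,dt=\frac{n!}{r^{n+1}}-\int_\delta^\infty t^ne^{-rt}\,dt,
\]
and the subtracted integral is $O(\delta^n e^{-r\delta/2})$, again exponentially small. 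The remainder satisfies
\[
\Bigl|\int_0^\delta R_N(t)e^{-rt}\,dt\Bigr|\le C_N\int_0^\infty t^Ne^{-rt}\,dt=\frac{C_N N!}{r^{N+1}}.
\]

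\emph{Conclusion.} Combining the three pieces gives
\[
F(r)=\sum_{n=0}^{N-1}\frac{g^{(n)}(0)}{r^{n+1}}+O\bigl(r^{-N-1}\bigr)
\]
for every $N$, which is exactly the asserted asymptotic expansion. The only step needing care is the uniform remainder bound $|R_N(t)|\le C_N t^N$. This is why $\delta$ must be chosen strictly inside the radius of convergence, rather than relying on pointwise analyticity alone.

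\emph{Application to Proposition~\ref{Prop:asympMom}.} Substituting $u=e^{-t}$ gives
\[
\varphi_r=\int_0^\infty f(e^{-t})e^{-t}e^{-rt}\,dt,
\]
which has the form of the lemma with $g(t)=f(e^{-t})e^{-t}$. The derivatives $g^{(\nu)}(0)$ are then computed via the Stirling-number expansion of $\bigl(\tfrac{d}{dt}\bigr)^{\nu}$ applied to functions of $e^{-t}$; this should produce the stated $\beta_\nu$.
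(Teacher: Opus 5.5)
Your proof is correct. The paper gives no proof of Watson's lemma and only cites it from \cite{Watson}; your argument is the standard proof of that cited result: split at $\delta$ strictly inside the radius of convergence, bound the Taylor remainder by $C_N t^N$ on $[0,\delta]$, and discard the exponentially small tails. Your closing remark also matches how the paper applies the lemma, with $g(t)=e^{-t}f(e^{-t})$.
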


\begin{proof}
[Proof of Proposition~\ref{Prop:asympMom}]We use the substitution $u=e^{-t}$
to obtain a Laplace-type integral:
\[
\varphi_{r}=\int_{0}^{\infty}g(t)e^{-rt}\,dt,\quad g(t):=e^{-t}f(e^{-t}).
\]
Watson's lemma yields
\[
\varphi_{r}\sim\sum_{\nu=0}^{\infty}\frac{g^{(\nu)}(0)}{r^{\nu+1}}.
\]
In order to find a simple expression for $g^{(\nu)}(0)$ we use an operator
formula. Let $\partial_{z}$ denote the differential operator with respect to
$z$, $\theta_{z}=z\partial_{z}$ the theta or homogeneity differential operator
and $E_{z=c}$ the evaluation operator at $z=c$. We observe that
\[
\partial_{t}e^{-t}f(e^{-t})=E_{x=e^{-t}}(-\theta_{x})xf(x),
\]
such that by induction
\[
g^{(\nu)}(t)=E_{x=e^{-t}}(-1)^{\nu}\theta_{x}^{\nu}xf(x).
\]
Next we use the expansion
\[
\theta_{x}^{\nu}=\sum_{k=0}^{\nu}%
\genfrac{\{}{\}}{0pt}{}{\nu}{k}%
x^{k}\partial_{x}^{k}%
\]
to obtain
\[
g^{(\nu)}(t)=(-1)^{\nu}\sum_{k=0}^{\nu}%
\genfrac{\{}{\}}{0pt}{}{\nu+1}{k+1}%
e^{-(k+1)t}f^{(k)}(e^{-t}),
\]
where we have used the basic recurrence relation
\[%
\genfrac{\{}{\}}{0pt}{}{\nu+1}{k}%
=k%
\genfrac{\{}{\}}{0pt}{}{\nu}{k}%
+%
\genfrac{\{}{\}}{0pt}{}{\nu}{k-1}%
,\quad0<k<\nu,
\]
for the Stirling numbers of the second kind. Finally, we evaluate at $t=0$ to
obtain the stated identity.
\end{proof}

\subsection{Nielsen's polylogarithm and an asymptotic series}

We continue by applying the asymptotics of the moment $\varphi_{nm}$, as
obtained in Proposition~\ref{Prop:asympMom} to~\eqref{eq:Mom}:
\[
I_{n}\sim\int_{0}^{1}f(u)\,du+\sum_{k=1}^{\infty}\frac{(-w)^{k}}{n^{k}%
}\sum_{m=1}^{\infty}%
\genfrac{[}{]}{0pt}{}{m}{k}%
\frac{(-q)^{m}}{m!}\sum_{\nu=1}^{\infty}\frac{\beta_{\nu-1}}{(nm)^{\nu}}.
\]
This implies that
\[
I_{n}\sim\int_{0}^{1}f(u)\,du+\sum_{k=1}^{\infty}\frac{w^{k}(-1)^{k}}{n^{k}%
}\sum_{\nu=1}^{\infty}\frac{\beta_{\nu-1}}{n^{\nu}}\sum_{m=1}^{\infty}%
\genfrac{[}{]}{0pt}{}{m}{k}%
\frac{(-q)^{m}}{m!m^{\nu}}.
\]
It remains to relate the coefficients to Nielsen's generalized polylogarithm.
The key is the following result.

\begin{lem}
\label{lem:Nielsen} If $|z|\leq1$, then Nielsen's generalized polylogarithm
function satisfies
\[
S_{m,p}(z)=\sum_{j=p}^{\infty}\left[  {%
\genfrac{}{}{0pt}{}{j}{p}%
}\right]  \frac{z^{j}}{j!\,j^{m}}=\sum_{1\leq j_{p}<\cdots<j_{2}<j_{1}}%
\frac{z^{j_{1}}}{j_{1}^{m+1}j_{2}\cdots j_{p}}.
\]

\end{lem}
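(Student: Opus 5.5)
We start from the defining integral and expand the power of $\log(1-zt)$ with the logarithmic expansion~\eqref{eq:ExpanLog}, used with $q$ replaced by $-z$ and $x=t$. This gives
\[
\log^{p}(1-zt)=p!\sum_{j=p}^{\infty}(-1)^{j-p}\genfrac{[}{]}{0pt}{}{j}{p}\frac{(-z)^{j}t^{j}}{j!}=(-1)^{p}p!\sum_{j=p}^{\infty}\genfrac{[}{]}{0pt}{}{j}{p}\frac{z^{j}t^{j}}{j!}.
\]
Substituting into the definition of $S_{m,p}(z)$, the factor $(-1)^p p!$ cancels against $(-1)^{p}/p!$. What remains is
\[
S_{m,p}(z)=\frac{(-1)^{m-1}}{(m-1)!}\sum_{j\ge p}\genfrac{[}{]}{0pt}{}{j}{p}\frac{z^{j}}{j!}\int_0^1\log^{m-1}(t)\,t^{j-1}\,dt .
\]
The elementary Mellin integral $\int_0^1 \log^{m-1}(t)t^{j-1}dt=(-1)^{m-1}(m-1)!/j^{m}$ (substitute $t=e^{-s}$ to get a Gamma integral) then gives the first equality.

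The justification of the term-by-term integration is the only delicate point. For $|z|<1$ the series converges uniformly on $[0,1]$, so the interchange is immediate. For $|z|=1$ I would argue as follows.
\begin{itemize}
\item All coefficients $\genfrac{[}{]}{0pt}{}{j}{p}|z|^j/j!$ are nonnegative.
\item For $z=1$, monotone convergence therefore applies directly.
\item For the other points $|z|=1$, I would dominate by the $z=1$ series. This series is $\frac{(-1)^p}{p!}\log^p(1-t)$ times a sign, and it is integrable against $|\log^{m-1}t|/t$. The only problematic endpoint is $t\to1$, where $\log^p(1-t)$ is integrable; near $t=0$ we have $\log^p(1-t)=O(t^p)$.
\item Dominated convergence then allows the interchange. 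For $m=1$, $z=1$ both sides may diverge, which is consistent with the convention that $\zeta(1,\dots)$ diverges.
\end{itemize}
Alternatively, I would invoke Abel's theorem, letting $z\to$ the boundary radially.

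For the second equality I would use the generating function of the Stirling cycle numbers in the form of elementary symmetric functions:
\[
\frac{1}{j!}\genfrac{[}{]}{0pt}{}{j}{p}\cdot\frac{j!}{(j-1)!}\ \text{ i.e. }\ \genfrac{[}{]}{0pt}{}{j}{p}=(j-1)!\,e_{p-1}\!\left(1,\tfrac12,\dots,\tfrac1{j-1}\right).
\]
This follows from $x(x+1)\cdots(x+j-1)=\sum_p\genfrac{[}{]}{0pt}{}{j}{p}x^p$ by factoring out $(j-1)!$. Then $\genfrac{[}{]}{0pt}{}{j}{p}/(j!\,j^m)=j^{-(m+1)}\sum_{1\le j_p<\dots<j_2<j}(j_2\cdots j_p)^{-1}$, and renaming $j=j_1$ gives the nested sum. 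The terms with $j<p$ vanish automatically.

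The main obstacle is the boundary case $|z|=1$ in the interchange step; the rest is a direct computation.
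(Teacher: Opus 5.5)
Your proposal is correct and is essentially the paper's proof: you expand $\log^{p}(1-zt)$ via Stirling cycle numbers, integrate termwise using $\int_0^1\log^{m-1}(t)\,t^{j-1}\,dt=(-1)^{m-1}(m-1)!/j^{m}$, and then use $\genfrac{[}{]}{0pt}{}{j}{p}=(j-1)!\,e_{p-1}(1,\tfrac12,\dots,\tfrac1{j-1})$ to obtain the nested sum; your domination by the $z=1$ series is a more explicit version of the paper's one-line appeal to absolute convergence. Two small corrections are needed. First, nothing diverges for $m=1$, $z=1$: the terms are $O\bigl((\log j)^{p-1}/j^{m+1}\bigr)$ with $m+1\ge 2$, so $S_{m,p}(1)=\zeta(m+1,\{1\}_{p-1})$ is always finite. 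Second, for $|z|<1$ uniform convergence alone does not justify the interchange, because $\log^{m-1}(t)/t$ is not integrable at $0$; your dominated-convergence argument covers that case as well.
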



\begin{proof}
Recall the integral representation
\[
S_{m,p}(z)=\frac{1}{(m-1)!\,p!}\int_{0}^{1}(-\log t)^{m-1}\bigl(-\log
(1-zt)\bigr)^{p}\,\frac{dt}{t}.
\]
For $|z|<1$, the generating function of the unsigned Stirling numbers of the
first kind gives
\[
\frac{\bigl(-\log(1-zt)\bigr)^{p}}{p!}=\sum_{j=p}^{\infty}\left[  {%
\genfrac{}{}{0pt}{}{j}{p}%
}\right]  \frac{(zt)^{j}}{j!}.
\]
Substituting this expansion into the integral and interchanging summation and
integration, we obtain
\[
S_{m,p}(z)=\frac{1}{(m-1)!}\sum_{j=p}^{\infty}\left[  {%
\genfrac{}{}{0pt}{}{j}{p}%
}\right]  \frac{z^{j}}{j!}\int_{0}^{1}(-\log t)^{m-1}t^{\,j-1}\,dt.
\]
Now
\[
\int_{0}^{1}(-\log t)^{m-1}t^{\,j-1}\,dt=\frac{(m-1)!}{j^{m}},
\]
and therefore
\[
S_{m,p}(z)=\sum_{j=p}^{\infty}\left[  {%
\genfrac{}{}{0pt}{}{j}{p}%
}\right]  \frac{z^{j}}{j!\,j^{m}}.
\]

Next we express the Stirling numbers in terms of truncated multiple zeta
values. From
\[
\sum_{k=1}^{j}\left[  {%
\genfrac{}{}{0pt}{}{j}{k}%
}\right]  x^{k}=x(x+1)\cdots(x+j-1)=x(j-1)!\prod_{r=1}^{j-1}\left(  1+\frac
{x}{r}\right)  ,
\]
we get
\[
\left[  {%
\genfrac{}{}{0pt}{}{j}{p}%
}\right]  =(j-1)!\,e_{p-1}\!\left(  1,\frac{1}{2},\dots,\frac{1}{j-1}\right)
,
\]
where $e_{p-1}$ denotes the $(p-1)$-st elementary symmetric polynomial. Hence
\[
\left[  {%
\genfrac{}{}{0pt}{}{j}{p}%
}\right]  =(j-1)!\,\zeta_{j-1}(\{1\}_{p-1}),
\]
where
\[
\zeta_{N}(i_{1},\dots,i_{r}):=\sum_{N\geq n_{1}>\cdots>n_{r}\geq1}\frac
{1}{n_{1}^{\,i_{1}}\cdots n_{r}^{\,i_{r}}}%
\]
denotes the truncated multiple zeta value, with the convention $\zeta
_{N}(\{1\}_{0})=1$.

Substituting this identity into the previous series yields
\[
S_{m,p}(z) =\sum_{j=1}^{\infty}\frac{z^{j}}{j^{m+1}}\, \zeta_{j-1}
(\{1\}_{p-1}).
\]
Finally, by the definition of truncated multiple zeta values,
\[
\zeta_{j_{1}-1}(\{1\}_{p-1}) =\sum_{1\le j_{p}<\cdots<j_{2}<j_{1}}\frac
{1}{j_{2}\cdots j_{p}}.
\]
Therefore
\[
S_{m,p}(z) =\sum_{j_{1}=1}^{\infty}\frac{z^{j_{1}}}{j_{1}^{m+1}} \sum_{1\le
j_{p}<\cdots<j_{2}<j_{1}}\frac{1}{j_{2}\cdots j_{p}} =\sum_{1\le j_{p}
<\cdots<j_{2}<j_{1}} \frac{z^{j_{1}}}{j_{1}^{m+1}j_{2}\cdots j_{p}},
\]
as claimed.

The cases $z=\pm1$ follow by absolute convergence.
\end{proof}



By Lemma~\ref{lem:Nielsen} we obtain
\begin{equation}
S_{m,p}(1)=\zeta(m+1,\{1\}_{p-1}),\quad S_{m,p}(-1)=\zeta(\overline
{m+1},\{1\}_{p-1}). \label{eqn:NielsenAMZV}%
\end{equation}
Concerning $I_{n}$, we get further
\[
I_{n}\sim\int_{0}^{1}f(u)\,du+\sum_{\nu=1}^{\infty}\beta_{\nu-1}\sum
_{k=1}^{\infty}(-w)^{k}\frac{S_{\nu,k}(-q)}{n^{\nu+k}}.
\]

By grouping powers of $\frac{1}{n}$, we obtain our first main result, namely
an asymptotic series for $I_{n}$.

\begin{theorem}
\label{the:main} The integral $I_{n}$ has the following asymptotic series for
$n\to\infty$
\[
I_{n} \sim\int_{0}^{1} f(u)\,du + \sum_{p=2}^{\infty} \frac{a_{p}}{n^{p}},
\]
where the coefficients $a_{p}=a_{p}(q,w)$ are given in terms of $f$, Nielsen's
polylogarithm and Stirling numbers of the second kind:
\[
a_{p}=\sum_{\ell=1}^{p-1} (-w)^{p-\ell}\beta_{\ell-1} 
S_{\ell,p-\ell}(-q), \quad\beta_{\nu}=(-1)^{\nu} \sum_{\ell=0}^{\nu}%
\genfrac{\{}{\}}{0pt}{}{\nu+1}{\ell+1}
f^{(\ell)}(1).
\]

In the special case $q=-1$ the coefficients $a_{p}$ reduce to multiple zeta
values and also ordinary zeta values,
\[
a_{p}=\sum_{\ell=1}^{p-1}(-w)^{p-\ell} \beta_{\ell-1} \zeta
(\ell+1,\{1\}_{p-\ell-1}),
\]
whereas for $q=1$ the coefficients involve alternating multiple zeta values:
\[
a_{p}=\sum_{\ell=1}^{p-1}(-w)^{p-\ell} \beta_{\ell-1} 
\zeta(\overline{\ell+1},\{1\}_{p-\ell-1}),
\]

\end{theorem}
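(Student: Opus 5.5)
The proof starts from the exact identity \eqref{eq:Mom}:
\[
I_n=\int_0^1 f(u)\,du+\sum_{k\ge1}\frac{(-w)^k}{n^k}\sum_{m\ge1}\genfrac{[}{]}{0pt}{}{m}{k}\frac{(-q)^m}{m!}\,\varphi_{nm}.
\]
Into it I substitute the moment expansion of Proposition~\ref{Prop:asympMom} with argument $r=nm$. To keep this rigorous rather than formal, I fix a truncation order $P$. For each $r$ I write
\[
\varphi_r=\sum_{\nu=1}^{P}\frac{\beta_{\nu-1}}{r^{\nu}}+R_P(r),\qquad |R_P(r)|\le C_P\,r^{-P-1}\quad (r\ge1).
\]
The uniform bound on $R_P$ comes from Watson's lemma together with boundedness of $\varphi_r$ for small $r$. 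I also split the $k$-sum into $k\le P$ and $k>P$.

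For the tail $k>P$ I use $|\varphi_{nm}|\le\int_0^1|f|$ and the bound
\[
\sum_{m}\genfrac{[}{]}{0pt}{}{m}{k}\frac{|q|^m}{m!}=\frac{(-\log(1-|q|))^k}{k!},
\]
valid for $|q|<1$. This shows the tail is $O(n^{-P-1})$. For $|q|=1$ this crude bound fails. There I use instead $|\varphi_{nm}|\le C/(nm)$, which holds since $f$ is analytic at $1$. Then
\[
\sum_m\genfrac{[}{]}{0pt}{}{m}{k}\frac{1}{m!\,m}=S_{1,k}(1)=\zeta(2,\{1\}_{k-1})=\zeta(k+1),
\]
which is bounded, so the tail is $\sum_{k>P}|w|^k n^{-k-1}\zeta(k+1)=O(n^{-P-2})$ for $n>2|w|$.

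In the main part, the remainder contributes at most
\[
C_P\sum_{k\le P}|w|^k n^{-k-P-1}\sum_m\genfrac{[}{]}{0pt}{}{m}{k}\frac{1}{m!\,m^{P+1}}=O(n^{-P-2}).
\]
The surviving terms are
\[
\sum_{k\le P}\sum_{\nu\le P}(-w)^k\beta_{\nu-1}\,n^{-k-\nu}\sum_m\genfrac{[}{]}{0pt}{}{m}{k}\frac{(-q)^m}{m!\,m^\nu}.
\]
By Lemma~\ref{lem:Nielsen} the inner sum is exactly $S_{\nu,k}(-q)$; absolute convergence for $|q|\le1$ holds because $\nu\ge1$.

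Setting $p=\nu+k$ and $\ell=\nu$ groups these terms into $a_p=\sum_{\ell=1}^{p-1}(-w)^{p-\ell}\beta_{\ell-1}S_{\ell,p-\ell}(-q)$. Since $\nu,k\ge1$ we have $p\ge2$, so there is no $1/n$ term. Every $p\le P$ is captured completely, and $P$ is arbitrary, which gives the asymptotic series. The $q=\mp1$ specializations follow by inserting \eqref{eqn:NielsenAMZV}, $S_{\ell,p-\ell}(1)=\zeta(\ell+1,\{1\}_{p-\ell-1})$ and $S_{\ell,p-\ell}(-1)=\zeta(\overline{\ell+1},\{1\}_{p-\ell-1})$. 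For $q=-1$, the reduction to ordinary zeta values uses the known fact that $\zeta(m+1,\{1\}_{k-1})$ is a polynomial in single zeta values; this follows from duality together with the generating function of $S_{m,p}(1)$, which is an expression in Gamma functions.

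The main obstacle is justifying the interchange of the asymptotic expansion with the double infinite sum. It needs an error bound for $\varphi_r$ that is uniform in $r=nm$ over all $m$. The case $q=-1$, $w>0$, where $(1-u^n)^{w/n}$ vanishes at $u=1$ and the series sits on the boundary of convergence, needs the separate tail estimate above.
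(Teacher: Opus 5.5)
Your proposal is correct and follows the paper's route: you substitute the Watson-lemma expansion of $\varphi_{nm}$ into the exact Stirling series \eqref{eq:Mom}, identify the $m$-sums with $S_{\nu,k}(-q)$ via Lemma~\ref{lem:Nielsen}, and group terms by $p=\nu+k$. The paper carries out this interchange only formally. Your truncation at order $P$, with the uniform remainder bound $|R_P(r)|\le C_P r^{-P-1}$ and the separate tail estimates (including the $\zeta(k+1)$ bound for $|q|=1$), supplies the justification that the paper leaves implicit.
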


Borwein, Bradley and Broadhurst proved that for all positive integers $n,m$
the multiple zeta value $\zeta(m+1,\{1\}_{n})$ is a rational polynomial in the
$\zeta(i)$~\cite[Eq. (10)]{BBB}:
\begin{equation}
\sum_{m,n\geq0}\zeta(m+2,\{1\}_{n})x^{m+1}y^{n+1}=1-\exp\biggl(\sum_{k\geq
2}\frac{x^{k}+y^{k}-(x+y)^{k}}{k}\zeta(k)\biggr). \label{eqn:BBB}%
\end{equation}
Alternatively, K\"{o}lbig~\cite{Kolbig} gave a recurrence relation for the
values $\zeta(m+1,\{1\}_{n})$ in terms of single-value zetas.
Thus, the coefficients $a_{p}(-1,w)$ are always reducible to polynomials in
ordinary zeta values. We discuss later the reducibility of the case $q=1$ and
$a_{p}=a_{p}(1,w)$. Before, we turn to simplifications of $\beta_{\nu}$.

\subsection{Appell sequences}

Next we study in more detail the coefficients $\beta_{\nu}$ of the moments
$\varphi_{r}=\int_{0}^{1}f(u)u^{r}$. An Appell-type family $\{P_{n}%
(x)\}_{n=0}^{\infty}$ of polynomials is defined by an exponential generating
function of the form
\begin{equation}
e^{xt}\ap(t)=\sum_{n=0}^{\infty}P_{n}(x)\frac{t^{n}}{n!},
\end{equation}
where $\ap(t)$ is analytic at $t=0$. If, in addition, $\ap%
(0)\neq0$, then $\{P_{n}(x)\}_{n=0}^{\infty}$ is a normalized Appell sequence
in the classical sense. The function $\ap(t)$ is called the
\emph{Appell seed}.

More generally, even when $\ap(0)=0$, the generating function still
defines a polynomial family, and the standard Appell identities remain valid,
in particular,
\[
\frac{d}{dx}P_{n}(x)=nP_{n-1}(x),\quad n\in\mathbb{N},
\]
as well as the addition theorem:
\begin{equation}
P_{n}(x+y)=\sum_{k=0}^{n}\binom{n}{k}P_{k}(x)y^{\,n-k}.
\label{eqn:AppellAdditionThm}%
\end{equation}
For this reason, in the sequel, we also allow such Appell-type families, which
include, for example, the generalized Genocchi polynomials.

We also mention the reflection symmetry: if the Appell seed $\ap(t)$
satisfies $\ap(-t)=e^{\omega t}\ap(t)$, then
\begin{equation}
P_{n}(\omega-x)=(-1)^{n}P_{n}(x). \label{eqn:AppellReflect}%
\end{equation}
Classical examples (beyond the trivial $P_{n}(x)=x^{n}$) include the Hermite
polynomials, the Bernoulli polynomials, and the generalized Euler polynomials
$E_{m}^{(d)}(x)$, with Appell seed
\[
\ap(t)=\left(  \frac{2}{1+e^{t}}\right)  ^{d}.
\]
Next we show that if the function $f(u)$ is related to an Appell-type seed,
then the coefficients $\{\beta_{\nu}\}$ are given by the corresponding
polynomial family evaluated at a constant.

\begin{theorem}
\label{Prop:Appell} Let $f(u)$ be written as
\begin{equation}
f(u)=b\cdot\ap(c\ln u), \label{eqn:f}%
\end{equation}
where $b,c\neq0$ are real non-zero constants and $\ap(t)$ is the seed
of the Appell-type sequence $\{P_{n}(x)\}_{n=0}^{\infty}$. Then the
coefficients $\beta_{\nu}$ of the asymptotic expansion of $\varphi_{r}%
=\int_{0}^{1}f(u)u^{r}du$ are given by
\begin{equation}
\beta_{\nu}=b\cdot(-1)^{\nu}c^{\nu}P_{\nu}\!\left(  \frac{1}{c}\right)  .
\end{equation}

\end{theorem}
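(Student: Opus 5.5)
The plan is to work directly with the Laplace-type representation from the proof of Proposition~\ref{Prop:asympMom}, rather than with the Stirling-number formula for $\beta_\nu$. With $u=e^{-t}$ we have $\varphi_r=\int_0^\infty g(t)e^{-rt}\,dt$, where $g(t)=e^{-t}f(e^{-t})$. Watson's lemma then gives $\beta_\nu=g^{(\nu)}(0)$. Substituting the ansatz~\eqref{eqn:f}, and using $c\ln(e^{-t})=-ct$, the function becomes
\[
g(t)=b\,e^{-t}\ap(-ct).
\]
So everything reduces to computing the Taylor coefficients of $g$ at $t=0$. This is legitimate because $\ap$ is analytic at $0$, so $g$ is real-analytic at $t=0$ as Watson's lemma requires. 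The integrability hypotheses are inherited from the standing assumptions on $f$.

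The key step is to recognise $g$ as an Appell generating function in the rescaled variable $s=-ct$. We have $e^{-t}=e^{s/c}$, so
\[
g(t)=b\,e^{(1/c)s}\ap(s)=b\sum_{\nu\ge0}P_\nu\!\left(\tfrac1c\right)\frac{s^\nu}{\nu!}
=b\sum_{\nu\ge0}P_\nu\!\left(\tfrac1c\right)(-c)^\nu\frac{t^\nu}{\nu!}.
\]
Reading off the coefficient of $t^\nu/\nu!$ gives
\[
g^{(\nu)}(0)=b(-1)^\nu c^\nu P_\nu(1/c),
\]
which is the claimed formula for $\beta_\nu$. This uses only the defining generating function of the Appell-type family. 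It therefore holds equally when $\ap(0)=0$, and $c\neq0$ ensures that $1/c$ makes sense.

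The only point needing care is the range of validity of the generating-function expansion. The series for $e^{xs}\ap(s)$ converges only in a neighbourhood of $s=0$, but Watson's lemma needs only the Taylor coefficients of $g$ at $0$, so this causes no difficulty. I expect the main obstacle to be bookkeeping rather than analysis: keeping the sign conventions consistent, including the sign of $c$ and the factor $(-1)^\nu$. As a cross-check I would take the trivial seed $\ap\equiv1$, $b=1$, $c=1$, so that $f\equiv1$ and $P_\nu(x)=x^\nu$. The claim then gives $\beta_\nu=(-1)^\nu$. This matches $\varphi_r=1/(r+1)=\sum_\nu(-1)^\nu r^{-\nu-1}$, and also the formula of Proposition~\ref{Prop:asympMom}, since only the $\ell=0$ term survives there and $\genfrac{\{}{\}}{0pt}{}{\nu+1}{1}=1$.
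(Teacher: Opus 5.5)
Your proof is correct and follows the paper's argument essentially verbatim: write $g(t)=e^{-t}f(e^{-t})=b\,e^{-t}\ap(-ct)$, recognise it as the Appell generating function at $x=1/c$ in the variable $-ct$, and read off $\beta_\nu=g^{(\nu)}(0)=b(-1)^\nu c^\nu P_\nu(1/c)$ via Watson's lemma. Your explicit substitution $s=-ct$ and the sanity check with $\ap\equiv 1$ are useful additions but do not change the route.
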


\begin{proof}
Let again $g(t)=e^{-t}f(e^{-t})$. By our assumption on $f$ we get
\[
g(t)=e^{-t}f(e^{-t})=be^{-t}\ap(-ct).
\]
Now recall that the Appell-type family $\{P_{n}(x)\}$ generated by the seed
$\ap(t)$ is defined via the exponential generating function
\[
\ap(t)e^{xt}=\sum_{\nu=0}^{\infty}P_{\nu}(x)\frac{t^{\nu}}{\nu!}.
\]
This implies that
\[
g(t)=\sum_{\nu=0}^{\infty}bP_{\nu}(\frac{1}{c})\frac{(-ct)^{\nu}}{\nu!}%
\]
On the other hand, as we know that
\[
g(t)=\sum_{\nu=0}^{\infty}g^{(\nu)}(0)\frac{t^{\nu}}{\nu!}=\sum_{\nu
=0}^{\infty}\beta_{\nu}\frac{t^{\nu}}{\nu!},
\]
which implies the stated result.
\end{proof}

Below we collect a few examples.

\begin{example}
[Monomials]The seed of the monomials $\{x^{n}\}$ is $\ap(t)=1$. If
$f(u)=1$, then $\beta_{\nu}=(-1)^{\nu}$.
\end{example}

\begin{example}
[Generalized Bernoulli polynomials]The seed of the generalized Bernoulli
polynomials $\{B_{n}^{(d)}(x)\}$ is given by
\[
\ap(t)=\left(  \frac{t}{e^{t}-1}\right)  ^{d}.
\]
If $f(u)=\ln^{d}(u)/(u^{c}-1)^{d}$, then
\[
f(u)=\frac{1}{c^{d}}\ap(c\ln(u)),\quad\text{and}\quad\beta_{\nu}%
=\frac{1}{c^{d}}(-1)^{\nu}B_{\nu}^{(d)}\!\left(  \frac{1}{c}\right)  .
\]

\end{example}

\begin{example}
[Generalized Euler polynomials]The seed of the generalized Euler polynomials
$\{E_{n}^{(d)}(x)\}$ is given by
\[
\ap(t)=\left(  \frac{2}{1+e^{t}}\right)  ^{d}.
\]
If $f(u)=1/(1+u^{c})^{d}$, then
\[
f(u)=\frac{1}{2^{d}}\ap(c\ln(u)),\quad\text{and}\quad\beta_{\nu}%
=\frac{1}{2^{d}}(-1)^{\nu}c^{\nu}E_{\nu}^{(d)}\!\left(  \frac{1}{c}\right)  .
\]

\end{example}

\begin{example}
[Generalized Genocchi polynomials]The seed of the generalized Genocchi
polynomials $\{G_{n}^{(d)}(x)\}$ satisfies
\[
\ap(t)=\left(  \frac{2t}{1+e^{t}}\right)  ^{d}.
\]
If $f(u)=\ln^{d}(u)/(1+u^{c})^{d}$, then
\[
f(u)=\frac{1}{(2c)^{d}}\ap(c\ln(u)),\quad\text{and}\quad\beta_{\nu
}=\frac{1}{(2c)^{d}}(-1)^{\nu}c^{\nu}G_{\nu}^{(d)}\!\left(  \frac{1}%
{c}\right)  .
\]

\end{example}

\begin{example}
[Probabilist's Hermite polynomials]The seed of the probabilist's Hermite
polynomials $\{He_{n}(x)\}$ is $\ap(t)=e^{-\frac{t^{2}}{2}}$. If
$f(u)=e^{-\frac{c^{2}\ln^{2}(u)}{2}}$, then
\[
f(u)=\ap(c\ln(u)),\quad\text{and}\quad\beta_{\nu}=(-1)^{\nu}c^{\nu
}He_{\nu}\!\left(  \frac{1}{c}\right)  .
\]

\end{example}

\section{MZVs and alternating MZVs}

\subsection{A symmetry condition and reduction to non-alternating MZVs}

In the evaluation of the integral $I_{n}$ the case $q=1$ is of special
interest~\cite{HKLL2020}. We consider
\[
I_{n}=\int_{0}^{1}f(u)(1+u^{n})^{w/n}\,du\sim\int_{0}^{1}f(u)\,du+\sum
_{p=2}^{\infty}\frac{a_{p}}{n^{p}}.
\]
Here, the values $a_{p}=a_{p}(1,w)$ are given by
\begin{equation}
a_{p}=\sum_{\ell=1}^{p-1}(-w)^{p-\ell}\beta_{\ell-1}S_{\ell,p-\ell
}(-1)=\sum_{\ell=1}^{p-1}w^{p-\ell}\beta_{\ell-1}\sigma_{\ell,p-\ell},
\label{eqn:QuOne0}%
\end{equation}
where we have used K\"{o}lbig's notation for the special values of Nielsen's
polylogarithm, reducing to alternating MZVs~\eqref{eqn:NielsenAMZV}:
\[
S_{n,p}(1)=s_{n,p},\qquad(-1)^{p}S_{n,p}(-1)=\sigma_{n,p}.
\]
Thus, the coefficients $a_{p}=a_{p}(1,w)$ depend on the $\sigma_{j,k}$ values.
We obtain the following reduction result, subject to a symmetry condition on a
weighted binomial transform of the coefficients $\beta_{\nu}$.



\begin{theorem}
Assume the coefficients $\beta_{\nu}$, defined in terms of $f$ by
\[
\beta_{\nu}=(-1)^{\nu}\sum_{\ell=0}^{\nu}%
\genfrac{\{}{\}}{0pt}{}{\nu+1}{\ell+1}%
f^{(\ell)}(1),
\]
satisfy for all $p\geq2$ the conditions
\[
\sum_{\ell=1}^{\nu}(-1)^{\ell-\nu}\binom{\nu-1}{\ell-1}\,w^{\ell}\beta
_{p-\ell-1}=\sum_{\ell=1}^{p-\nu}(-1)^{\ell-p+\nu}\binom{p-\nu-1}{\ell
-1}\,w^{\ell}\beta_{p-\ell-1},
\]
for $1\leq\nu\leq\left\lfloor \frac{p}{2}\right\rfloor$. Then, the integral $I_{n}$ has the expansion
\[
I_{n}=\int_{0}^{1}f(u)(1+u^{n})^{w/n}\,du\sim\int_{0}^{1}f(u)\,du+\sum
_{p=2}^{\infty}\frac{1}{n^{p}}\sum_{\nu=1}^{p-1}\rho_{p,\nu}\,s_{\nu,p-\nu},
\]
where one symmetric choice of coefficients is
\[
\rho_{p,\nu}=\frac{1}{2}\sum_{\ell=1}^{\nu}(-1)^{\nu-\ell}\binom{\nu-1}%
{\ell-1}\,w^{\ell}\beta_{p-\ell-1},
\]
with $1\leq\nu\leq\left\lfloor \frac{p}{2}\right\rfloor $, together with
$\rho_{p,p-\nu}=\rho_{p,\nu}$.
\end{theorem}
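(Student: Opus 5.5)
The plan is to prove the identity at the level of the integral representation of the coefficients, not through series manipulations of alternating MZVs. By~\eqref{eqn:QuOne0} and Nielsen's integral formula,
\[
a_p=\sum_{\ell=1}^{p-1} w^{p-\ell}\beta_{\ell-1}\sigma_{\ell,p-\ell},\qquad
\sigma_{\ell,k}=\frac{(-1)^{\ell-1}}{(\ell-1)!\,k!}\int_0^1 \log^{\ell-1}(t)\,\log^{k}(1+t)\,\frac{dt}{t}.
\]
So $a_p$ is a single integral over $[0,1]$ of a homogeneous polynomial of degree $p-1$ in $\log t$ and $\log(1+t)$, against $dt/t$. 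The goal is to rewrite this integral as one of the same shape with $\log(1+t)$ replaced by $\log(1-x)$, over the full interval $[0,1]$. Integrals of that shape are exactly the values $s_{\nu,p-\nu}=S_{\nu,p-\nu}(1)=\zeta(\nu+1,\{1\}_{p-\nu-1})$.

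\textbf{Step 1 (change of variables).} Substitute $t=x/(1-x)$, which maps $t\in[0,1]$ to $x\in[0,\tfrac12]$. Then
\[
\log t=\log x-\log(1-x),\qquad \log(1+t)=-\log(1-x),\qquad \frac{dt}{t}=\frac{dx}{x}+\frac{dx}{1-x}.
\]
Expanding $(\log x-\log(1-x))^{\ell-1}$ binomially gives an integrand on $[0,\tfrac12]$ of the form
\[
\sum_{\nu}\Bigl(c_{p,\nu}\,\frac{1}{x}+c'_{p,\nu}\,\frac{1}{1-x}\Bigr)\log^{\nu-1}(x)\,\log^{p-\nu}(1-x).
\]
The coefficients $c_{p,\nu},c'_{p,\nu}$ are explicit combinations of $w^{j}\beta_{p-j-1}$ with signed binomials $\binom{\nu-1}{\ell-1}$, obtained after re-indexing $j=p-\ell$. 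I will carry out this bookkeeping carefully. The shapes $\sum_\ell(-1)^{\nu-\ell}\binom{\nu-1}{\ell-1}w^\ell\beta_{p-\ell-1}$ appearing in the hypothesis and in $\rho_{p,\nu}$ should emerge exactly here, and matching them is the main consistency check of the plan.

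\textbf{Step 2 (reflection).} Apply $x\mapsto 1-x$. This maps $[\tfrac12,1]$ to $[0,\tfrac12]$, swaps $\log x\leftrightarrow\log(1-x)$, and swaps $dx/x\leftrightarrow dx/(1-x)$. The integrand on $[0,\tfrac12]$ is therefore invariant under the reflection if and only if the coefficient of $\log^{\nu-1}x\log^{p-\nu}(1-x)\,dx/x$ equals that of $\log^{p-\nu}x\log^{\nu-1}(1-x)\,dx/(1-x)$ for every $\nu$. The core of the proof is to check that this termwise equality is precisely the stated hypothesis for $1\le\nu\le\lfloor p/2\rfloor$, the cases $\nu>p/2$ following by the $\nu\leftrightarrow p-\nu$ symmetry of the hypothesis. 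Under invariance, the integral over $[0,\tfrac12]$ equals $\tfrac12$ of the integral over $[0,1]$; this is the source of the factor $\tfrac12$ in $\rho_{p,\nu}$.

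\textbf{Step 3 (identification).} On $[0,1]$, the terms with $dx/(1-x)$ are converted by the reflection into terms with $dx/x$. Each resulting integral
\[
\int_0^1\log^{\nu-1}(x)\,\log^{p-\nu}(1-x)\,\frac{dx}{x}
\]
is $\pm(\nu-1)!(p-\nu)!\,s_{\nu,p-\nu}$ by Nielsen's definition at $z=1$. Collecting terms, including the sign and factorial normalizations, gives $a_p=\sum_{\nu=1}^{p-1}\rho_{p,\nu}s_{\nu,p-\nu}$ with symmetric weights $\rho_{p,\nu}=\rho_{p,p-\nu}$. This symmetric choice is not unique, because of the duality $s_{\nu,p-\nu}=s_{p-\nu,\nu}$ (the duality $\zeta(\nu+1,\{1\}_{p-\nu-1})=\zeta(p-\nu+1,\{1\}_{\nu-1})$ for MZVs). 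By~\eqref{eqn:BBB}, every $s_{\nu,p-\nu}$ is a polynomial in ordinary zeta values.

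\textbf{Main obstacle.} The hard part is the combinatorial bookkeeping in Step 1: tracking signs, factorials, and the re-indexing so that the reflection condition comes out literally as the displayed hypothesis, with its particular binomials $\binom{\nu-1}{\ell-1}$ and $\binom{p-\nu-1}{\ell-1}$. I would first test the computation on $p=2,3$ before doing the general index manipulation.

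The only analytic point is the interchange of the sum over $\ell$ with the integrals, which is harmless since each $p$ gives a finite sum of absolutely convergent integrals.
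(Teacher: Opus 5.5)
\textbf{There is a genuine gap: the equivalence claimed in your Step 2 is false for every even $p$.} Put $A=\log x$, $B=\log(1-x)$. After $t=x/(1-x)$, the forms $dx/x$ and $dx/(1-x)$ both carry the same polynomial
$G(A,B)=\sum_{\ell=1}^{p-1}\frac{(-1)^{\ell-1}w^{p-\ell}\beta_{\ell-1}}{(\ell-1)!\,(p-\ell)!}(A-B)^{\ell-1}(-B)^{p-\ell}$.
So your invariance condition is exactly $G(A,B)=G(B,A)$, and this is not the hypothesis.

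\begin{itemize}
\item For $p=2$ the hypothesis is vacuous, since both sides equal $w\beta_0$. But $G=-w\beta_0B$ is invariant only if $w\beta_0=0$. Nevertheless $a_2=w\beta_0\sigma_{1,1}=\tfrac12 w\beta_0\zeta(2)$. The reason is that the $\tfrac12\log^2 2$ coming from $\int_0^{1/2}$ of the $dx/x$-part cancels the one from the $dx/(1-x)$-part, i.e.\ only after integration.
\item For $p=4$ the hypothesis is the single condition $w\beta_0=2\beta_1$, whereas invariance additionally needs $3\beta_2=w\beta_1$. The paper's own example $f(u)=2(1+u)^{-3}$, $w=1$ has $\beta_0=\tfrac14$, $\beta_1=\tfrac18$, $\beta_2=-\tfrac18$. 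It satisfies the hypothesis, violates invariance, and still gives $a_4=-\pi^4/960$.
\end{itemize}

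In general the hypothesis only forces $G(A,B)-G(B,A)$ to be a multiple of $(B-A)^{p-1}$. For odd $p$ that power is itself symmetric, so there your condition does coincide with the hypothesis (both read $2\beta_1=w\beta_0$ at $p=3$). For even $p$ your condition is strictly stronger.

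\textbf{Why the reflection argument cannot absorb the leftover piece.} The residual term $(B-A)^{p-1}\frac{dx}{x(1-x)}=-\frac1p\,d\bigl[\log^p\frac{1-x}{x}\bigr]$ is exact but not integrable at $x=0$. Once you subtract it, the symmetric remainder acquires an $A^{p-1}\,dx/x$ term and is not integrable either. So ``$\int_0^{1/2}=\frac12\int_0^1$'' cannot be applied.

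This non-invariant part is precisely what K\"olbig's identity, which the paper uses, accounts for. Already $s_{1,3}=2\sigma_{1,3}+\sigma_{2,2}+\sigma_{3,1}$ has an integrand whose antisymmetric part is $-\frac16(B-A)^3\neq0$.

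\textbf{How to repair it.} Either quote or prove that identity; you then land on the paper's route: solve the Pascal system for the pair-sums $\eta_{p,\nu}$ and impose $\eta_{p,\nu}=\eta_{p,p-\nu}$. Or run your reflection on $[\epsilon,\tfrac12]$ with the symmetric part $G^s=G-\kappa(B-A)^{p-1}$. The exact piece then contributes $\frac{\kappa}{p}\log^p\frac{1-\epsilon}{\epsilon}$, which cancels the divergence of $-\kappa\int_\epsilon^{1-\epsilon}(-\log x)^{p-1}\frac{dx}{x}$.

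\textbf{Coefficient matching.} Reflection pairs $A^{\nu-1}B^{p-\nu}$ with $A^{p-\nu}B^{\nu-1}$, i.e.\ $\nu\leftrightarrow p+1-\nu$, not $\nu\leftrightarrow p-\nu$. So even where invariance holds you obtain weights $r_\nu$ with $r_1=0$, not the stated $\rho_{p,\nu}$. You still have to convert them using the duality $s_{\nu,p-\nu}=s_{p-\nu,\nu}$. Also, the factor $\tfrac12$ in $\rho_{p,\nu}$ comes from splitting $\eta_{p,\nu}$ symmetrically, not from the half-interval.
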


\begin{proof}
K\"olbig~\cite[Theorem 3]{Kolbig} provides a relation connecting $\sigma
_{j,k}$ and $s_{j,k}$, which reads
\begin{equation}
\label{eq:KolbigIdent}\sum_{\nu=1}^{j}\binom{j+k-\nu-1}{k-1}\sigma
_{\nu,j+k-\nu} +\sum_{\nu=1}^{k}\binom{j+k-\nu-1}{j-1}\sigma_{\nu,j+k-\nu}
=s_{j,k}.
\end{equation}

We seek coefficients $\rho_{p,\ell}$ to express $a_{p}$ as a linear
combination of the $s_{j,k}$ values. Rewrite~\eqref{eq:KolbigIdent} by setting
$p:=j+k$:
\[
\sum_{\nu=1}^{j}\binom{p-\nu-1}{p-j-1}\sigma_{\nu,p-\nu} +\sum_{\nu=1}%
^{p-j}\binom{p-\nu-1}{j-1}\sigma_{\nu,p-\nu} =s_{j,p-j}.
\]
Extending the range of summation, including the zero values of the binomials,
gives
\begin{equation}
\sum_{\nu=1}^{p-1}\sigma_{\nu,p-\nu} \left(  \binom{p-\nu-1}{p-j-1}%
+\binom{p-\nu-1}{j-1} \right)  =s_{j,p-j},\qquad1\le j\le p-1.
\label{eqn:QuOne1}%
\end{equation}

A linear combination of~\eqref{eqn:QuOne1} is
\[
\sum_{\ell=1}^{p-1}\rho_{p,\ell} \sum_{\nu=1}^{p-1}\sigma_{\nu,p-\nu} \left(
\binom{p-\nu-1}{p-\ell-1}+\binom{p-\nu-1}{\ell-1} \right)  = \sum_{\ell
=1}^{p-1}\rho_{p,\ell}\,s_{\ell,p-\ell}.
\]
By change of order of summation, we get
\[
\sum_{\nu=1}^{p-1}\sigma_{\nu,p-\nu} \sum_{\ell=1}^{p-1}\rho_{p,\ell} \left(
\binom{p-\nu-1}{p-\ell-1}+\binom{p-\nu-1}{\ell-1} \right)  = \sum_{\ell
=1}^{p-1}\rho_{p,\ell}\,s_{\ell,p-\ell}.
\]

If we find $(\rho_{p,\ell})_{\ell=1}^{p-1}$ such that
\begin{equation}
\sum_{\ell=1}^{p-1}\rho_{p,\ell} \left(  \binom{p-\nu-1}{p-\ell-1}%
+\binom{p-\nu-1}{\ell-1} \right)  = w^{p-\nu}\beta_{\nu-1}, \qquad1\le\nu\le
p-1, \label{eqn:QuOne2}%
\end{equation}
then a combination of~\eqref{eqn:QuOne2} and~\eqref{eqn:QuOne0} gives the
desired expression:
\[
\sum_{\nu=1}^{p-1}w^{p-\nu}\beta_{\nu-1}\sigma_{\nu,p-\nu} = \sum_{\ell
=1}^{p-1}\rho_{p,\ell}\,s_{\ell,p-\ell}.
\]

Now our goal is to find a sufficient condition on the values $\beta_{p}$ and
$w$ such that the system~\eqref{eqn:QuOne2} is solvable. Re-indexing
\eqref{eqn:QuOne2}, we obtain for $p\ge2$ the equivalent system
\begin{equation}
\sum_{\ell=1}^{p-1}\binom{p-\nu-1}{\ell-1} \left(  \rho_{p,\ell}%
+\rho_{p,p-\ell} \right)  = w^{p-\nu}\beta_{\nu-1}, \qquad1\le\nu\le p-1.
\label{eqn:QuOne3}%
\end{equation}
We introduce the pair-sums
\[
\eta_{p,\nu}:=\rho_{p,\nu}+\rho_{p,p-\nu}, \qquad1\le\nu\le p-1.
\]
By their definition, they have to satisfy the symmetry condition
\begin{equation}
\eta_{p,\nu}=\eta_{p,p-\nu}, \qquad\nu=1,\ldots,\left\lfloor \frac{p}%
{2}\right\rfloor , \label{eqn:symm}%
\end{equation}
using the convention $\eta_{p,p/2}=2\rho_{p,p/2}$ when $p$ is even. In terms
of the pair-sums $\eta$ the equations~\eqref{eqn:QuOne3} can be written as
\begin{equation}
\sum_{\ell=1}^{p-1}\binom{p-\nu-1}{\ell-1}\eta_{p,\ell} = w^{p-\nu}\beta
_{\nu-1}, \qquad1\le\nu\le p-1. \label{eqn:QuOne4}%
\end{equation}
It is straightforward to solve the Pascal matrix system~\eqref{eqn:QuOne4} by
binomial inversion. We arrive at the solution
\begin{equation}
\eta_{p,\nu} = \sum_{j=1}^{\nu} (-1)^{\nu-j}\binom{\nu-1}{j-1}\,w^{j}%
\beta_{p-j-1}, \qquad1\le\nu\le p-1. \label{eqn:sol}%
\end{equation}
Now a condition for the system to have a solution is consistency: the
$\eta_{p,\nu}$ values~\eqref{eqn:sol} have to satisfy the solvability
condition \eqref{eqn:symm}. This translates into the constraints
\[
\sum_{\ell=1}^{\nu} (-1)^{\ell-\nu}\binom{\nu-1}{\ell-1}\,w^{\ell}%
\beta_{p-\ell-1} = \sum_{\ell=1}^{p-\nu} (-1)^{\ell-p+\nu}\binom{p-\nu-1}%
{\ell-1}\,w^{\ell}\beta_{p-\ell-1}, 
\]
with $1\le\nu\le\left\lfloor \frac{p}{2}\right\rfloor$. Note that once the symmetry condition~\eqref{eqn:symm} holds, the pair-sums
are fixed by~\eqref{eqn:sol}. This solution does not separate the individual
values $\rho_{p,k}$ and $\rho_{p,p-k}$; it fixes only the $\eta_{p,k}$'s. The
differences $\rho_{p,k}-\rho_{p,p-k}$ remain undetermined. We reduce the
dimension of the solution space to $1$ by imposing the symmetry
\[
\rho_{p,\nu}=\rho_{p,p-\nu}, \qquad\nu=1,\ldots,\left\lfloor \frac{p}%
{2}\right\rfloor .
\]
Thus,
\[
\rho_{p,\nu} = \frac{\eta_{p,\nu}}{2} = \frac12 \sum_{\ell=1}^{\nu}
(-1)^{\nu-\ell}\binom{\nu-1}{\ell-1}\,w^{\ell}\beta_{p-\ell-1},
\]
for $1\le\nu\le\left\lfloor \frac{p}{2}\right\rfloor $.
\end{proof}

\subsection{The binomial transform and the symmetry condition}

The solvability condition is a symmetry requirement on a weighted binomial
transform of the coefficients $\beta_{\nu}$. Our next goal is to study this
property in more detail.

Let $(a_{\nu})$ be a sequence of numbers and define its binomial
transform~\cite{Knuth} $(\psi_{\nu})$ by:
\begin{equation}
\psi_{\nu}=\sum_{\ell=0}^{\nu}(-1)^{\ell}\binom{\nu}{\ell}a_{\ell}.
\end{equation}
We seek a condition on the sequence $(a_{\nu})$ such that the transformed
sequence satisfies the following symmetry for a fixed integer $p$:
\begin{equation}
\psi_{\nu}=(-1)^{p}\psi_{p-\nu},\qquad0\leq\nu\leq p.
\end{equation}
To find the necessary and sufficient condition, we utilize the method of
generating polynomials. Let us define a generating polynomial $\Psi_{p}(z)$
for the first $p+1$ terms of the sequence $(\psi_{\nu})$:
\begin{equation}
\Psi_{p}(z)=\sum_{\nu=0}^{p}\binom{p}{\nu}\psi_{\nu}z^{\nu}.
\end{equation}
Similarly, we define a generating polynomial $A_{p}(x)$ based on the sequence
$(a_{\nu})$:
\begin{equation}
A_{p}(x)=\sum_{k=0}^{p}\binom{p}{k}(-1)^{k}a_{k}x^{k}. \label{def:genpoly}%
\end{equation}

\begin{theorem}
\label{the:symmPoly} Let $(a_{\nu})$ be a sequence of numbers and let
$(\psi_{\nu})$ denote its binomial transform. The sequence $(\psi_{\nu})$
satisfies the symmetry
\[
\psi_{\nu}= (-1)^{p} \psi_{p-\nu},\quad0 \le\nu\le p,
\]
if and only if the generating polynomial
\[
A_{p}(x) = \sum_{k=0}^{p} \binom{p}{k} (-1)^{k} a_{k} x^{k}%
\]
satisfies the symmetry condition:
\[
A_{p}(x) = (-1)^{p} A_{p}(1-x).
\]

\end{theorem}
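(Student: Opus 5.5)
The plan is to express $\Psi_{p}$ directly through $A_{p}$ and then read off both directions of the equivalence from that relation. Substituting the definition of $\psi_\nu$ into $\Psi_p(z)$ and exchanging the order of summation gives
\[
\Psi_{p}(z)=\sum_{\ell=0}^{p}(-1)^{\ell}a_{\ell}\sum_{\nu=\ell}^{p}\binom{p}{\nu}\binom{\nu}{\ell}z^{\nu}.
\]
For the inner sum we use $\binom{p}{\nu}\binom{\nu}{\ell}=\binom{p}{\ell}\binom{p-\ell}{\nu-\ell}$ together with the binomial theorem, which give $\binom{p}{\ell}z^{\ell}(1+z)^{p-\ell}$. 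Hence
\[
\Psi_{p}(z)=(1+z)^{p}\sum_{\ell=0}^{p}\binom{p}{\ell}(-1)^{\ell}a_{\ell}\Bigl(\frac{z}{1+z}\Bigr)^{\ell}=(1+z)^{p}A_{p}\Bigl(\frac{z}{1+z}\Bigr).
\]
This identity is the key step: the Möbius map $x=z/(1+z)$ turns the reflection $x\mapsto 1-x$ into the inversion $z\mapsto 1/z$.

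Next we translate the coefficient symmetry into a functional equation for $\Psi_{p}$. Since $\binom{p}{\nu}=\binom{p}{p-\nu}$, the condition $\psi_\nu=(-1)^p\psi_{p-\nu}$ for all $0\le\nu\le p$ is equivalent to the polynomial identity
\[
\Psi_{p}(z)=(-1)^{p}z^{p}\Psi_{p}(1/z).
\]
To see this, compare coefficients of $z^\nu$ on both sides; this is legitimate because $\Psi_p$ has degree at most $p$.

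Now compute the right-hand side using the key identity. We have $z^{p}\Psi_{p}(1/z)=z^p(1+1/z)^p A_p\bigl(\tfrac{1/z}{1+1/z}\bigr)=(1+z)^{p}A_{p}\bigl(\tfrac{1}{1+z}\bigr)$, and $\tfrac{1}{1+z}=1-\tfrac{z}{1+z}$. So the symmetry of $(\psi_\nu)$ is equivalent to
\[
(1+z)^{p}A_{p}(x)=(-1)^{p}(1+z)^{p}A_{p}(1-x),\qquad x=\frac{z}{1+z},
\]
as an identity of rational functions in $z$ for $z\neq -1$.

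It remains to check both directions. For the reverse direction, assume $A_p(x)=(-1)^pA_p(1-x)$. Then the displayed identity holds for all $z\neq -1$, and polynomial identity extends it to all $z$. For the forward direction, divide by $(1+z)^p$. The map $z\mapsto z/(1+z)$ takes every value $x\neq 1$, since $z=x/(1-x)$, so $A_p(x)=(-1)^pA_p(1-x)$ holds for infinitely many $x$ and hence identically. The main obstacle is just the clean evaluation of the inner double sum; everything after it is bookkeeping about which rational identities extend to polynomial identities.
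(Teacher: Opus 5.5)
Your proposal is correct and follows the paper's proof essentially step for step: you derive the identity $\Psi_p(z)=(1+z)^p A_p\left(\frac{z}{1+z}\right)$, recast the coefficient symmetry as $\Psi_p(z)=(-1)^p z^p\Psi_p(1/z)$, and substitute $x=z/(1+z)$. You are somewhat more careful than the paper about both directions of the equivalence, namely the coefficient comparison and extending the identity from the values $x\neq 1$ to a polynomial identity.
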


This implies that $A_{p}(x)$ is symmetric (if $p$ is even) or anti-symmetric
(if $p$ is odd) about the line $x=\frac{1}{2}$.

\begin{proof}
First, we express the symmetry condition in terms of the polynomial $\Psi
_{p}(z)$. Substituting $\psi_{\nu}=(-1)^{p}\psi_{p-\nu}$ into $\Psi_{p}(z)$:
\[
\Psi_{p}(z)=\sum_{\nu=0}^{p}\binom{p}{\nu}(-1)^{p}\psi_{p-\nu}z^{\nu}.
\]
Let $j=p-\nu$, then
\[
\Psi_{p}(z)=(-1)^{p}z^{p}\sum_{j=0}^{p}\binom{p}{j}\psi_{j}\left(  \frac{1}%
{z}\right)  ^{j}.
\]
Thus, the condition on $(\psi_{\nu})$ is equivalent to the functional
equation:
\begin{equation}
\Psi_{p}(z)=(-1)^{p}z^{p}\Psi_{p}(z^{-1}). \label{eqn:Psi}%
\end{equation}
We now express $\Psi_{p}(z)$ in terms of $(a_{\nu})$. By definition of
$\psi_{\nu}$ we get
\[
\Psi_{p}(z)=\sum_{\nu=0}^{p}\binom{p}{\nu}z^{\nu}\sum_{\ell=0}^{\nu}\binom
{\nu}{\ell}(-1)^{\ell}a_{\ell}.
\]
Furthermore, we have
\[
\Psi_{p}(z)=\sum_{\ell=0}^{p}(-1)^{l}a_{l}\sum_{\nu=l}^{p}\binom{p}{\nu}%
\binom{\nu}{l}z^{\nu}=\sum_{\ell=0}^{p}\binom{p}{\ell}(-1)^{\ell}a_{\ell}%
\sum_{\nu=\ell}^{p}\binom{p-\ell}{\nu-\ell}z^{\nu}.
\]
Further simplifications give
\[
\Psi_{p}(z)=\sum_{\ell=0}^{p}\binom{p}{\ell}(-1)^{\ell}a_{\ell}z^{\ell
}(1+z)^{p-\ell}=(1+z)^{p}\sum_{\ell=0}^{p}\binom{p}{\ell}(-1)^{\ell}a_{\ell
}\left(  \frac{z}{1+z}\right)  ^{\ell}.
\]
Observing the definition of $A_{p}(x)$ in (\ref{def:genpoly}), we arrive at
the fundamental identity:
\begin{equation}
\Psi_{p}(z)=(1+z)^{p}A_{p}\!\left(  \frac{z}{1+z}\right)  .
\end{equation}
We now apply this identity to the symmetry condition~\eqref{eqn:Psi}:
\[
(1+z)^{p}A_{p}\left(  \frac{z}{1+z}\right)  =(-1)^{p}(1+z)^{p}A_{p}\!\left(
\frac{1}{1+z}\right)  .
\]
Dividing by the non-zero factor $(1+z)^{p}$, we obtain:
\[
A_{p}\!\left(  \frac{z}{1+z}\right)  =(-1)^{p}A_{p}\!\left(  \frac{1}%
{1+z}\right)  ,
\]
which leads to the stated condition after setting $x=\frac{z}{1+z}$.
\end{proof}

\subsection{Checking the solvability condition for coefficients related to
Appell-type families}

Recall~from \eqref{eqn:f} that for functions $f(u)=b\cdot\ap(c\ln u)$
the coefficients $\beta_{\nu}$ are given by
\[
\beta_{\nu}=b(-1)^{\nu}c^{\nu}P_{\nu}\!\left(  \frac{1}{c}\right)  ,
\]
where $P_{\nu}(x)$ are the polynomials generated by $\ap(t)$.

For fixed $p\ge2$, define the weighted sequence
\[
a_{k}^{(p,w)}:=w^{k+1}\beta_{p-k-2}, \qquad0\le k\le p-2.
\]
Then~\eqref{eqn:sol} can be rewritten as
\[
\eta_{p,\nu}=(-1)^{\nu-1}\psi_{\nu-1}^{(p,w)}, \qquad1\le\nu\le p-1,
\]
where $(\psi_{m}^{(p,w)})_{m=0}^{p-2}$ is the binomial transform of
$(a_{k}^{(p,w)})_{k=0}^{p-2}$:
\[
\psi_{m}^{(p,w)} = \sum_{\ell=0}^{m}(-1)^{\ell}\binom{m}{\ell}a_{\ell}%
^{(p,w)}.
\]
Hence the symmetry condition $\eta_{p,\nu}=\eta_{p,p-\nu}$ is equivalent to
\[
\psi_{m}^{(p,w)}=(-1)^{p}\psi_{p-2-m}^{(p,w)}, \qquad0\le m\le p-2.
\]
Since $p$ and $p-2$ have the same parity, Theorem~\ref{the:symmPoly} applies
with $N:=p-2$. Therefore this condition holds if and only if the polynomial
\[
A_{p-2}^{(w)}(x) = \sum_{k=0}^{p-2}\binom{p-2}{k}(-1)^{k}a_{k}^{(p,w)}x^{k}
\]
satisfies
\[
A_{p-2}^{(w)}(x)=(-1)^{p}A_{p-2}^{(w)}(1-x).
\]

Using the Appell form of $\beta_{\nu}$, we get
\[
a_{k}^{(p,w)}=w^{k+1}b(-1)^{p-k-2}c^{p-k-2}P_{p-k-2}\!\left(  \frac{1}%
{c}\right)  ,
\]
hence
\[
A_{p-2}^{(w)}(x)=b(-1)^{p-2}wc^{p-2}\sum_{k=0}^{p-2}\binom{p-2}{k}%
P_{p-k-2}\!\left(  \frac{1}{c}\right)  \left(  \frac{wx}{c}\right)  ^{k}.
\]
By the addition theorem for Appell sequences~\eqref{eqn:AppellAdditionThm},
with $n=p-2$, $y=1/c$, and $z=wx/c$, this becomes
\[
A_{p-2}^{(w)}(x)=b(-1)^{p-2}wc^{p-2}P_{p-2}\!\left(  \frac{1+wx}{c}\right)  .
\]
Therefore the solvability condition is equivalent to
\[
P_{p-2}\!\left(  \frac{1+wx}{c}\right)  =(-1)^{p}P_{p-2}\!\left(
\frac{1+w-wx}{c}\right)  .
\]
If the Appell family satisfies the reflection symmetry
\[
P_{n}(z)=(-1)^{n}P_{n}(\omega-z),
\]
then, since $(-1)^{p}=(-1)^{p-2}$, this is equivalent to
\[
\omega-\frac{1+wx}{c}=\frac{1+w-wx}{c},
\]
which yields
\[
c\omega=w+2.
\]
Thus, the solvability condition holds whenever the Appell-type family
possesses a reflection symmetry around $\omega/2$ and
\[
c\omega=w+2.
\]

\subsection{Solvability condition for coefficients related to several
Appell-type families}

\begin{example}
[Generalized Euler polynomials]\label{ex:GenEulerPoly} The Appell seed is
\[
\ap(t):=\left(  \frac{2}{1+e^{t}}\right)  ^{d}.
\]
We verify that $\ap(-t)=e^{dt}\ap(t)$, hence $E_{n}^{\left(
d\right)  }\left(  d-x\right)  =\left(  -1\right)  ^{n}E_{n}^{\left(
d\right)  }\left(  x\right)  .\ $It follows that $\omega=d$. \newline Thus,
the coefficients
\[
\beta_{\nu}=\frac{1}{2^{d}}(-1)^{\nu}c^{\nu}E_{\nu}^{(d)}\!\left(  \frac{1}%
{c}\right)
\]
related to generalized Euler polynomials satisfy the solvability condition if
$cd=w+2$.
\end{example}

\begin{example}
[Generalized Genocchi polynomials]The Appell seed is
\[
\ap(t):=\left(  \frac{2t}{e^{t}+1}\right)  ^{d}.
\]
We verify that $\ap(-t)=(-1)^{d}e^{dt}\ap(t)$, hence $\left(
-1\right)  ^{d}G_{n}^{\left(  d\right)  }\left(  d-x\right)  =\left(
-1\right)  ^{n}G_{n}^{\left(  d\right)  }\left(  x\right)  .$ It follows that
if $d$ is even then $\omega=d$.\newline Thus, the coefficients
\[
\beta_{\nu}=\frac{1}{(2c)^{d}}(-1)^{\nu}c^{\nu}G_{\nu}^{(d)}\!\left(  \frac
{1}{c}\right)
\]
related to generalized Genocchi polynomials satisfy the solvability condition
if $cd=w+2$ and $d$ is even.
\end{example}

\begin{example}
[Generalized Bernoulli polynomials]The Appell seed is
\[
\ap(t):=\left(  \frac{t}{e^{t}-1}\right)  ^{d}.
\]
We verify that $\ap(-t)=e^{dt}\ap(t)$, hence $B_{n}^{\left(
d\right)  }\left(  d-x\right)  =\left(  -1\right)  ^{n}B_{n}^{\left(
d\right)  }\left(  x\right)  .\ $It follows that $\omega=d$.\newline Thus, the
coefficients
\[
\beta_{\nu}=\frac{1}{c^{d}}(-1)^{\nu}B_{\nu}^{(d)}\!\left(  \frac{1}%
{c}\right)
\]
related to generalized Bernoulli polynomials satisfy the solvability condition
if $cd=w+2$.
\end{example}

\begin{example}
[Probabilist's Hermite polynomials]\label{ex:GenHermitePoly} The Appell seed
is
\[
\ap(t):=e^{-\frac{t^{2}}{2}}.
\]
We verify that $\ap(-t)=\ap(t)$, hence $He_{n}\left(
-x\right)  =\left(  -1\right)  ^{n}He_{n}\left(  x\right)  .\ $It follows that
$\omega=0$.\newline Thus, the coefficients
\[
\beta_{\nu}=(-1)^{\nu}c^{\nu}He_{\nu}\left(  \frac{1}{c}\right)
\]
related to probabilist's Hermite polynomials satisfy the solvability condition
if $\ w=-2$.
\end{example}

\section{Applications\label{sec:Apps}}
\subsection{Sine-Cosine integrals}
We relate the integral ~\eqref{eq:int2} to the family ~\eqref{eq:int1}.
Applying the substitution $u=\tan(x)$, the integral~\eqref{eq:int2} transforms
as follows:
\[
\int_{0}^{\frac{\pi}{2}}\cos(x)\bigl|1-\tan^{n}(x)\bigr|^{\frac{1}{n}}%
dx=\int_{0}^{\infty}\frac{\lvert1-u^{n}\rvert^{\frac{1}{n}}}{(1+u^{2}%
)^{\frac{3}{2}}}du.
\]
Utilizing the symmetry of the integrand, this expression simplifies to
\begin{equation}
\int_{0}^{1}\frac{2}{(1+u^{2})^{\frac{3}{2}}}(1-u^{n})^{\frac{1}{n}}du.
\end{equation}
To be more precise, we split the integral
\[
\int_{0}^{\infty}\frac{|1-u^{n}|^{1/n}}{(1+u^{2})^{3/2}}\,du=\int_{0}^{1}%
\frac{(1-u^{n})^{1/n}}{(1+u^{2})^{3/2}}\,du+\int_{1}^{\infty}\frac
{(u^{n}-1)^{1/n}}{(1+u^{2})^{3/2}}\,du.
\]
Using the substitution $u=\tfrac{1}{x}$ in the second integral, we obtain
\[
\int_{1}^{\infty}\frac{(u^{n}-1)^{1/n}}{(1+u^{2})^{3/2}}\,du=\int_{0}^{1}%
\frac{(1-x^{n})^{1/n}}{(1+x^{2})^{3/2}}\,dx.
\]
Therefore,
\[
\int_{0}^{\infty}\frac{|1-u^{n}|^{1/n}}{(1+u^{2})^{3/2}}du=\int_{0}%
^{1}f(u)(1-u^{n})^{1/n}du,\,\text{with }f(u)=\frac{2}{(1+u^{2})^{3/2}}.
\]
Our main result in Theorem~\ref{the:main} and~\eqref{eqn:BBB} leads to a
complete asymptotic expansion in terms of ordinary zeta values, where the
coefficients $\beta_{\nu}$ are determined by $f$ and Theorem~\ref{Prop:Appell}%
, as $f$ can be written using the Appell seed of the generalized Euler
polynomials:
\[
\ap(t)=\left(  \frac{2}{1+e^{t}}\right)  ^{\frac{3}{2}},\quad
f(u)=\frac{2}{(1+u^{2})^{3/2}}=\frac{1}{\sqrt{2}}\cdot\ap%
\big(2\ln(u)\big).
\]
This implies that $\beta_{\nu}$ is given by the generalized Euler
polynomials:
\[
\beta_{\nu}=\frac{2^{\nu}}{\sqrt{2}}(-1)^{\nu}E_{\nu}^{(3/2)}(\frac{1}{2}).
\]
We summarize our findings below
\[%
\begin{split}
I_{n}  &  \sim\int_{0}^{1}\frac{2}{(1+u^{2})^{3/2}}\,du+\sum_{p=2}^{\infty
}\frac{a_{p}}{n^{p}}\\
&  =\sqrt{2}+\sum_{p=2}^{\infty}\frac{1}{n^{p}}\sum_{\ell=1}^{p-1}%
\frac{2^{\ell-1}}{\sqrt{2}}(-1)^{\ell-1}E_{\ell-1}^{(3/2)}(\frac{1}%
{2})(-1)^{p-\ell}\zeta(\ell+1,\{1\}_{p-\ell-1}),
\end{split}
\]
where the first few concrete values of the coefficients are given by
\begin{align*}
a_{2}&=\frac{\sqrt{2}\pi^{2}}{24},\quad
a_{3}=\frac{\sqrt{2}\zeta(3)}{4},\quad a_{4}=-\frac{\sqrt{2}\pi^{4}}{1152},\\
a_{5}  &  =-\frac{\sqrt{2}\pi^{2}\zeta(3)}{32}+\frac{\sqrt{2}\zeta(5)}%
{16},\quad a_{6}=\frac{131\sqrt{2}\pi^{6}}{193536}-\frac{3\sqrt{2}\zeta
(3)^{2}}{32}.
\end{align*}

This integral can be interpreted as a special instance of the moments of the random variable  
\[
T_n = \Big|\sin^n\big(\frac{\pi U}2 \big)-\cos^n\big(\frac{\pi U}2 \big)\Big|^{\frac1n}
\]
with $U$ uniformly distributed on $[0,1]$. The expected value is a constant multiple of the integral treated before:
\[
\E(T_n)=\int_0^{1}\Big|\sin^n\big(\frac{\pi x}2 \big)-\cos^n\big(\frac{\pi x}2 \big)\Big|^{\frac1n}dx
=\frac{2}{\pi}\int_0^{\frac{\pi}2}|\sin^n(u)-\cos^n(u)|^{\frac1n}du.
\]
Moreover, higher moments of $T_n$, $w>0$, lead to the integrals
\[
\E(T_n^w)=\frac{2}{\pi}\int_0^{\frac{\pi}2}|\sin^n(z)-\cos^n(z)|^{\frac{w}n}dz.
\]
Proceeding as before, we arrive at the integral
\[
\E(T_n^w)=\int_0^1 f_w(u)(1-u^n)^{\frac{w}{n}}du, \quad f_w(u)=\frac{4}{\pi}\cdot\frac{1}{(1+u^2)^{1+w/2}}.
\]
Again, $f_w$ can be written using the Appell seed of the generalized Euler
polynomials:
\[
\ap(t)=\left(  \frac{2}{1+e^{t}}\right)^{1+\frac{w}{2}},\quad
f_w(u)
=\frac{1}{\pi 2^{w/2-1}}\cdot\ap%
\big(2\ln(u)\big).
\]
This implies that $\beta_{\nu}$ is given by the generalized Euler
polynomials:
\[
\beta_{\nu}=\frac{2^{\nu+1-w/2}}{\pi}(-1)^{\nu}E_{\nu}^{(1+w/2)}(\frac{1}{2}).
\]
We summarize our findings below
\[%
\begin{split}
\E(T_n^w) &  \sim\int_{0}^{1}\frac{4}{\pi(1+u^{2})^{1+w/2}}\,du+\sum_{p=2}^{\infty
}\frac{a_{p}}{n^{p}}  =\frac2{\pi}B_{\frac12}(\frac12,\frac{w+1}2)\\
&\quad+\sum_{p=2}^{\infty}\frac{(-1)^{p-1}}{n^{p}}\sum_{\ell=1}^{p-1}%
\frac{2^{\ell-w/2}}{\pi}E_{\ell-1}^{(1+w/2)}(\frac{1}%
{2})\zeta(\ell+1,\{1\}_{p-\ell-1}),
\end{split}
\]
with $B_z(a,b)$ denote the incomplete Beta-function. We note that $T_n$ converges to a random variable $T_\infty$, with raw moments given by
\[
\E(T_n^w) = \int_0^{1}\frac{4}{\pi}\cdot\frac{1}{(1+u^2)^{1+w/2}}du =\frac{4}{\pi}\int_0^{\frac{\pi}4}\cos^w(t)dt
=\frac2{\pi}B_{\frac12}(\frac12,\frac{w+1}2).
\]
Finally, we mention that the distribution of $T_\infty=\max\{\sin(U,\cos(U))\}$, $U$ uniform on $[0,\pi/2]$, is given by an arcsin-law
\[
\mathbb{P}\{T_\infty \le x \}=\frac{4}\pi \arcsin(x)-1,\quad \frac{\sqrt{2}}2\le x\le 1.
\]
A intimately related variant is $S_{n}=\Vert\Big(\sin^n\big(\frac{\pi U}2  \big),\cos^n\big(\frac{\pi U}2 \big)\Big)\Vert_{n}$,
whose moments are 
\[
\E(S_n^w)=\frac{2}{\pi}\int_0^{\frac{\pi}2}\big(\sin^n(z)+\cos^n(z)\big)^{\frac{w}n}dz.
\]
Our results apply again all for non-zero real $w$.

\subsection{Norm of a random vector}
As noted in the introduction, see ~\eqref{def:norm}, the study of norms of
random vectors naturally gives rise to integrals of the form $I_{n}$.
We present two examples: the first yields asymptotic coefficients related to
generalized Euler polynomials, while the second leads to coefficients
associated with the probabilist's Hermite polynomials. Let $\left(  U,1-U\right)$ be random vector, where $U\sim$ Uniform$[0,1]$
distribution, and let $Z_{n}$ $=\Vert(U,1-U)\Vert_{n}$ denote its $n-$th
norm. Then
\[
\E(Z_{n})=\mathbb{E}\Big(U^{n}+(1-U)^{n}\Big)^{\frac{1}{n}}=\int
_{0}^{1}[x^{n}+(1-x)^{n}]^{\frac{1}{n}}dx.
\]
This integral also appeared as a problem in the American Mathematical
Monthly~\cite{AMM2016} and was treated by Louchard~\cite{Louchard}, as well
as~\cite{HKLL2020}. Using the symmetry around $x=\frac{1}{2}$, one can
simplify this to
\[
2\int_{0}^{\frac{1}{2}}[x^{n}+(1-x)^{n}]^{\frac{1}{n}}dx=2\int_{0}^{\frac
{1}{2}}(1-x)\left[  1+\left(  \frac{x}{1-x}\right)  ^{n}\right]  ^{\frac{1}%
{n}}dx.
\]
Now let $u=\frac{x}{1-x}$, or $x=\frac{u}{1+u}$. Then $dx=\frac{du}{(1+u)^{2}%
}$, and we have
\begin{align*}
\E(Z_{n})  &  =2\int_{0}^{1}\left(  1-\frac{u}{1+u}\right)
(1+u^{n})^{\frac{1}{n}}\frac{du}{(1+u)^{2}}\\
&  =\int_{0}^{1}f(u)(1+u^{n})^{1/n}\,du,\,\qquad\text{with }f(u)=\frac
{2}{(1+u)^{3}}.
\end{align*}
More generally, the $w-$th raw moment is
\[
\E(Z_{n}^{w})=\int_{0}^{1}f_{w}(u)(1+u^{n})^{\frac{w}{n}}%
du,\quad\text{with }f_{w}(u)=\frac{2}{(1+u)^{w+2}}.
\]
Our general theorem allows us to re-obtain the previous
results~\cite{HKLL2020}, with $\beta_{\nu}$ given in terms of the generalized
Euler polynomials, as we have
\[
\ap(t)=(\frac{2}{1+e^{t}})^{w+2},\quad f_{w}(u)=\frac{2}{(1+u)^{w+2}%
}=\frac{1}{2^{w+1}}\cdot\ap(\ln(u)),
\]
such that
\[
\beta_{\nu}=\frac{1}{2^{w+1}}(-1)^{\nu}E_{\nu}^{(w+2)}(1).
\]
or alternatively,
\begin{equation}
\beta_{\nu}=(-1)^{\nu}\sum_{j=0}^{\nu}\frac{(-1)^{j}j!\binom{w+1+j}{j}%
\genfrac{\{}{\}}{0pt}{}{\nu+1}{j+1}%
}{2^{w+1+j}}. \label{eqn:expansionZ_n2}%
\end{equation}
Consequently,
\begin{equation}
\E(Z_{n}^{w})~\sim a_{0}+\frac{1}{2^{w+1}}\sum_{p=2}^{\infty}%
\frac{(-1)^{p-1}}{n^{p}}\sum_{\ell=1}^{p-1}w^{p-\ell}E_{\ell-1}^{(w+2)}%
(1)\zeta(\overline{\ell+1},\{1\}_{p-\ell-1}), \label{eqn:expansionZ_n}%
\end{equation}
where
\[
a_{0}=\left\{
\begin{array}
[c]{c}%
\frac{2-2^{-w}}{1+w},\ \ \ \ \ \ if\ w\neq-1,\ \ \\
2\ln\left(  2\right)  ,\ \ \ \ \ if\ w=-1.
\end{array}
\right.
\]
For the expected value, case $w=1$, we can use our previous considerations
also to obtain an expansion in terms of ordinary zeta values, as we can use
Example~\ref{ex:GenEulerPoly}, generalized Euler polynomials with Appell seed
$\ap(t)=(\frac{2}{1+e^{t}})^{d}$, where $d=3$, such that
$f(u)=\frac{1}{4}\ap(\ln(u))$, satisfying the required assumption
$cd=3$. The results are in complete agreement with the previously obtained
numbers~\cite{HKLL2020}:
\[
\E(Z_{n})\sim\frac{3}{4}+\sum_{p=2}^{\infty}\frac{a_{p}}{n^{p}},
\]
with the first few concrete values given by
\begin{align*}
&  a_{2}=\frac{\pi^{2}}{48},\,a_{3}=\frac{\zeta(3)}{8},\,a_{4}=-\frac{\pi^{4}%
}{960},\,a_{5}=-\frac{\pi^{2}\zeta(3)}{48},\,a_{6}=\frac{83\pi^{6}}%
{241920}-\frac{\zeta(3)^{2}}{16}\\
&  a_{7}=\frac{3\pi^{4}\zeta(3)}{640}+\frac{\pi^{2}\zeta(5)}{32}+\frac
{3\zeta(7)}{16},\,a_{8}=-\frac{253\pi^{8}}{14515200}+\frac{5\pi^{2}%
\zeta(3)^{2}}{192}+\frac{3\zeta(3)\zeta(5)}{16}.
\end{align*}

We note in passing that from the theory of norms we anticipate the limit law
$Z_{\infty}$, with
\[
Z_{\infty}=\Vert(U,1-U)\Vert_{\infty}=\max\{U,1-U\}\sim\text{Uniform}[\frac
{1}{2},1].
\]
Our main result immediately leads to moment convergence plus the complete
asymptotic expansion in terms of alternating multiple zeta values, where
\[
\E(Z_{\infty}^{w})=\int_{0}^{1}\frac{2}{(1+u)^{w+2}}du=\frac{2\left(
1-\frac{1}{2^{w+1}}\right)  }{w+1}.
\]
Now to the second example.

Let $\left(  e^{\frac{Y}{2}},e^{-\frac{Y}{2}}\right)  $ be random vector with
$Y\sim\ \mathcal{N}\left(  0,1\right)  ,$ the standard normal distribution,
and let $\ Z_{n}$ $=\Vert\left(  e^{\frac{Y}{2}},e^{-\frac{Y}{2}}\right)
\Vert_{n}$ denote its $n-$th norm. The $w-$th raw moment of $Z_{n}$
is
\[
\E(Z_{n}^{w})=\int_{-\infty}^{\infty}\left(  e^{\frac{ny}{2}%
}+e^{-\frac{ny}{2}}\right)  ^{\frac{w}{n}}\frac{e^{-\frac{y^{2}}{2}}}%
{\sqrt{2\pi}}dy=\sqrt{\frac{2}{\pi}}\int_{0}^{\infty}e^{-\frac{y^{2}}{2}%
}\left(  e^{\frac{ny}{2}}+e^{-\frac{ny}{2}}\right)  ^{\frac{w}{n}}dy.
\]
Now we use the substitution
\[
y=-\ln\left(  u\right)  ,\ dy=-\frac{1}{u}du,\ \ e^{-y}=u.
\]
This gives,%
\[
\E(Z_{n}^{w})=\sqrt{\frac{2}{\pi}}\int_{0}^{1}e^{-\frac{\ln^{2}\left(
u\right)  }{2}}\left(  u^{\frac{n}{2}}+u^{-\frac{n}{2}}\right)  ^{\frac{w}{n}%
}\frac{du}{u}=\int_{0}^{1}f_{w}(u)(1+u^{n})^{w/n}\,du,\text{ }%
\]
with%
\[
f_{w}(u)=\sqrt{\frac{2}{\pi}}\frac{e^{-\frac{\ln\left(  u\right)  ^{2}}{2}}%
}{u^{\frac{w}{2}+1}}.
\]
For the second inverse moment value, corresponding to $w=-2$, we may apply
Example~\ref{ex:GenHermitePoly}.\newline In this case $\ap%
(t)=e^{-\frac{t^{2}}{2}}$, $f(u)=\sqrt{\frac{2}{\pi}}\ap(\ln(u))$,
satisfying the required symmetry condition for $w=-2.$

Therefore, the asymptotic expansion of the inverse square moment can be
written in terms of the probabilist's Hermite polynomials,
\begin{equation}
\E(Z_{n}^{-2})\sim\sqrt{e}\Erfc\left(  \frac{1}{\sqrt{2}%
}\right)  +\sum_{p=2}^{\infty}\frac{1}{n^{p}}\sum_{\nu=1}^{p-1}2^{p-\nu}%
\beta_{\nu-1}\zeta(\overline{\nu+1},\{1\}_{p-\nu-1}),
\end{equation}
where
\[
\beta_{\nu}=(-1)^{\nu}\sqrt{\frac{2}{\pi}}He_{\nu}(1).
\]
Since the solvability condition is met for $w=-2,$ then we can obtain an
expansion in terms of ordinary zeta values%
\[
\E(Z_{n}^{-2})\sim\sqrt{e}\Erfc\left(  \frac{1}{\sqrt{2}%
}\right)  +\sum_{p=2}^{\infty}\frac{1}{n^{p}}\sum_{\nu=1}^{p-1}\rho_{p,\nu
}\zeta(\nu+1,\{1\}_{p-\nu-1}),
\]
where%
\[
\rho_{p,\nu}=\left(  -1\right)  ^{\nu}\sum_{\ell=1}^{\nu}\binom{\nu-1}{\ell
-1}2^{\ell-1}\beta_{p-\ell-1}.
\]
The first few concrete values of the coefficients are given by%
\[
\E(Z_{n}^{-2})\sim \sqrt{e}\ \Erfc\left(  \frac{1}{\sqrt{2}}\right)+\sum_{p=2}^{\infty}\frac{a_{p}}{n^{p}},
\]%
\begin{align*}
a_{2}&=-\sqrt{\frac{2}{\pi}}\zeta\left(  2\right)  ,\quad
a_{3}=2\sqrt{\frac{2}{\pi}}\zeta(3),\quad a_{4}=-\sqrt{\frac{2}{\pi}}\frac
{1}{2}\zeta(4),\\
a_{5}  &  =\sqrt{\frac{2}{\pi}}\left(  -\frac{2\pi^{2}\zeta(3)}{3}%
+4\zeta(5)\right)  ,\quad a_{6}=\sqrt{\frac{2}{\pi}}\left(  -\frac{13}{8}%
\zeta(6)+4\zeta(3)^{2}\right)  .
\end{align*}

\subsection{Difference of random variables}

We study a counterpart of the random variable $Z_{n}$~\eqref{def:norm}. Let
\begin{equation}
Y_{n}=|U^{n}-(1-U)^{n}|^{\frac{1}{n}},\quad U=\text{Uniform}[0,1].
\label{def:rvDiff}%
\end{equation}
By the same arguments as before, we obtain for the expectation of $Y_{n}$ the
expression
\[
\E(Y_{n})=\E\Big|U^{n}-(1-U)^{n}\Big|^{\frac{1}{n}}=\int
_{0}^{1}|x^{n}-(1-x)^{n}|^{\frac{1}{n}}dx.
\]
Splitting again at $x=\frac{1}{2}$ and the previous substitutions give
\[
\E(Y_{n})=\int_{0}^{1}f(u)(1-u^{n})^{1/n}\,du,\,\qquad\text{with
}f(u)=\frac{2}{(1+u)^{3}}.
\]
Similarly, its $w-$th raw moment is given by
\[
\E(Y_{n}^{w})=\int_{0}^{1}f_{w}(u)(1-u^{n})^{\frac{w}{n}}du,\quad
f_{w}(u)=\frac{2}{(1+u)^{w+2}}.
\]
Our general result applies again and provides a detailed moment convergence,
almost identical to~\eqref{eqn:expansionZ_n}, but with non-alternating MZVs:
\begin{equation}
\E(Y_{n}^{w})~\sim\frac{2\left(  1-\frac{1}{2^{w+1}}\right)  }%
{w+1}+\sum_{p=2}^{\infty}\frac{1}{n^{p}}\sum_{\ell=1}^{p-1}w^{p-\ell}%
\beta_{\ell-1}(-1)^{p-\ell}\zeta(\ell+1,\{1\}_{p-\ell-1}),
\label{eqn:expansionY_n}%
\end{equation}
with $\beta_{\nu}$ as stated in~\eqref{eqn:expansionZ_n2}. For the interested
reader we note that
\[
|U^{n}-(1-U)^{n}|=\max(U,1-U)^{n}\left\vert 1-\left(  \frac{\min(U,1-U)}%
{\max(U,1-U)}\right)  ^{n}\right\vert .
\]
Consequently, taking the $n$-th root gives
\[
Y_{n}=\max(U,1-U)\left\vert 1-\left(  \frac{\min(U,1-U)}{\max(U,1-U)}\right)
^{n}\right\vert ^{1/n}.
\]
Thus, $Y_{n}$ converges
to $\max(U,1-U)$ and the moment convergence highlights the asymptotics.

\bibliographystyle{siam}
\bibliography{AIntAMZV-ref}

\begin{thebibliography}{10}

\bibitem{BBB}
{\sc J.~M. Borwein, D.~M. Bradley, and D.~J. Broadhurst}, {\em Evaluation of
  $k$-fold euler/zagier sums: a compendium of results for arbitrary $k$},
  Electron. J. Combin., 4 (1997).
\newblock res. art. 5.

\bibitem{Charlton}
{\sc S.~Charlton, H.~Gangl, and D.~Radchenko}, {\em On functional equations for
  nielsen polylogarithms}, Communications in Number Theory and Physics, 15
  (2021), pp.~363--454.

\bibitem{AMM2016}
{\sc O.~Furdui}, {\em Problem 11941}, Amer. Math. Monthly, 123 (2016).

\bibitem{AMM2025}
{\sc O.~Furdui and A.~S\^{i}nt\u{a}m\u{a}rian}, {\em Problem 12550}, Amer.
  Math. Monthly, 131 (2025).

\bibitem{H1992}
{\sc M.~E. Hoffman}, {\em Multiple harmonic series}, Pacific J. Math., 152
  (1992), pp.~275--290.

\bibitem{HKLL2020}
{\sc M.~E. Hoffman, M.~Kuba, M.~Levy, and G.~Louchard}, {\em An asymptotic
  series for an integral}, Ramanujan Journal 53, 53 (2020), pp.~1--25.

\bibitem{Knuth}
{\sc D.~E. Knuth}, {\em The Art of Computer Programming, Volume 3: Sorting and
  Searching}, Addison-Wesley, Reading, MA, 1973.

\bibitem{Kolbig}
{\sc K.~S. K\"olbig}, {\em Nielsen's generalized polylogarithms}, SIAM J. Math.
  Anal., 17 (1986), pp.~1232--1258.

\bibitem{Louchard}
{\sc G.~Louchard}, {\em Two applications of polylog functions and euler sums}.
\newblock ArXiv:1709.08686, 2017.

\bibitem{Nielsen}
{\sc N.~Nielsen}, {\em Der {E}ulersche {D}ilogarithmus und seine
  {V}erallgemeinerungen}, Nova Acta Leopoldina, 90 (1909), p.~123–211.

\bibitem{Watson}
{\sc G.~N. Watson}, {\em The harmonic functions associated with the parabolic
  cylinder}, Proceedings of the London Mathematical Society, 2 (1918),
  p.~116–148.

\bibitem{Z1992}
{\sc D.~Zagier}, {\em Values of zeta functions and their applications}, Progr.
  Math., First European Congress of Mathematics (Paris, 1992), vol. II (A.
  Joseph et al., eds.), 120 (1994), pp.~497--512.

\end{thebibliography}
{}

\end{document}